\numberwithin{equation}{section}
\theoremstyle{plain}
\newtheorem{lemma}{Lemma}[section]
\newtheorem{proposition}[lemma]{Proposition}
\newtheorem{theorem}[lemma]{Theorem}
\newtheorem{corollary}[lemma]{Corollary}
\theoremstyle{definition}
\newtheorem{definition}[lemma]{Definition}
\newtheorem{remark}[lemma]{Remark}
\newtheorem{example}[lemma]{Example}
\definecolor{brown}{RGB}{150,100,0}
\definecolor{purple}{RGB}{150,0,100}
\definecolor{grey}{RGB}{128,128,128}
\newcommand{\R}{{\mathbb R}}
\newcommand{\N}{{\mathbb N}}
\newcommand{\Di}{{\rm Diff}}
\newcommand{\Om}{{\Omega}}
\newcommand{\om}{{\omega}}
\newcommand{\eps}{{\varepsilon}}  
\newcommand{\codim}{{\rm  codim}}
\newcommand{\Mf}{{\mathfrak M}}
\newcommand{\Nf}{{\mathfrak N}}
\newcommand{\X}{{\mathfrak X}}
\newcommand{\B}{{\mathfrak B}}
\newcommand{\Imm}{{\rm Imm}}
\newcommand{\vol}{{\rm vol}}
\newcommand{\Exp}{{\rm Exp}}
\def\NABLA#1{{\mathop{\nabla\kern-.5ex\lower1ex\hbox{$#1$}}}}
\def\Nabla#1{\nabla\kern-.5ex{}_#1}
\newcommand{\p}{{\partial}}
\newcommand{\la}{\langle}
\newcommand{\ra}{\rangle}
\begin{document}
	\title[ higher dimensional knot spaces]{Formally integrable complex structures on higher dimensional knot spaces}
	
	\author{Domenico Fiorenza}
	\address{
		Dipartimento di Matematica ``Guido Castelnuovo'',
		Universit\`a   di Roma ``La Sapienza",
		Piazzale Aldo Moro 2, 00185 Roma, 
		Italy}
	\email{fiorenza@mat.uniroma1.it} 
	
	\author{H\^ong V\^an L\^e }
	\address{Institute  of Mathematics of the Czech Academy of Sciences,
		Zitna 25, 11567  Praha 1, Czech Republic}
		\email{hvle@math.cas.cz}
		\thanks{Research  of HVL was supported  by  GA\v CR-project 18-00496S and
		 RVO: 67985840}
	\date{\today}

\keywords{vector cross product structure,  loop space,  weak  $L_2$-metric, formally  integrable    complex structure}
\subjclass[2010]{Primary:58D10, Secondary:53D15, 46T10 }

\begin{abstract} Let $S$  be a  compact oriented    finite  dimensional   manifold  and
	$M$    a finite  dimensional Riemannian  manifold, let $\Imm_f(S,M)$    the space of all free immersions  $\varphi:S \to  M$ 
	and let $B^+_{i,f}(S,M)$  the quotient  space  $\Imm_f(S,M)/\Di^+(S)$, where  $\Di^+(S)$ denotes  the  group of   orientation preserving  diffeomorphisms of $S$.     In  this paper   we prove that  if  $M$ admits  a  parallel  $r$-fold  vector cross product  $\chi \in \Om ^r(M, TM)$    and   $\dim  S = r-1$
	then   $B^+_{i,f}(S,M)$     is a  formally K\"ahler  manifold.   This  generalizes       Brylinski's, LeBrun's    and   Verbitsky's results  
	for the  case that $S$ is a   codimension 2 submanifold  in $M$, and  $S = S^1$ or $M$ is a  torsion-free  $G_2$-manifold respectively.   
  \end{abstract}

 \maketitle
  
\section{Introduction}\label{sec:intr}

Let $(M, g)$  be  a  finite  dimensional Riemannian    manifold   with     a  $r$-fold  vector cross  product (VCP for short)
$\chi \in \Om^r (M, TM)$. By definition, this means that  $\chi$  satisfies the following  properties
$$	\la \chi(v_1, \cdots, v_r), v_i  \ra = 0 \text{  for }   1\le i \le r,$$
$$	\la  \chi(v_1, \cdots, v_r), \chi(v_1, \cdots , v_r) \ra = \| v_1 \wedge \cdots \wedge v_r\|^2$$
where $\la   \cdot, \cdot \ra$ denotes  the  inner   product    defined by  $g$ and $\| \cdot \|$ is  the induced metric  on $\Lambda ^r TM$.
For a   VCP $\chi \in \Om^r (M, TM)$  we       associate  the   VCP-form $\varphi_\chi \in \Om ^{r+1}(M)$  as follows \cite[p. 143]{LL2007}
\begin{equation}\label{eq:varphi-chi}
\varphi_\chi  (v_1, \cdots,  v_{r+1}) = \la \chi(v_1, \cdots, v_r), v_{r+1}\ra.
\end{equation}
The notion of a VCP  structure on a manifold was introduced by  Gray \cite{BG1967}, \cite{Gray1969}. In \cite{BG1967}   Brown and Gray  classified  linear VCPs  on an $m$-dimensional real vector space   $V^m$   with positive definite inner   product $g$.  Their results    can be summarized as follows:

(1)  A  1-fold  VCP on $V^m$  exists iff $m = 2n$. It   is defined uniquely by   its  associated VCP-form   which is    a K\"ahler form on $V^{2n}\cong V^n\otimes_{\mathbb R} \mathbb{C}$.

(2)  A $(m-1)$-fold VCP  on $V^m$ is   defined   uniquely by  its associated  VCP-form   which is
the volume form on $(V^m,g)$.

(3) A 2-fold  VCP   exists on $V^m$ iff   $m = 7$  and   it is defined  uniquely
by its associated  VCP-form, which is  the associative 3-form on $\R^7$,  whose  stabilizer is the exceptional Lie group $G_2$.

(4)  A 3-fold  VCP  exists  on  $V^m$  iff $m =8$ and  it is defined   uniquely  by its associated  VCP-form, which is  the Cayley   4-form  on $\R^8$,  whose  stabilizer is  the Lie group 
Spin(7).

It   follows from  Brown-Gray's classification    that a  Riemannian manifold $(M^{m},g)$ admits  a  $r$-fold VCP iff    either  (1)  $r=1$, $m =2n$ and $(M, g)$ is   an  almost Hermitian  manifold and the associated VCP-form is the fundamental 2-form  of the almost Hermitian manifold $(M,g)$,  (2)  $r = m-1$ and  $(M^m, g)$  is an orientable  Riemannian manifold,  (3)  $r =2$,
$m=7$ and $(M^7,g)$ admits  an  associative  3-form  $\varphi^3$,   (4)  $r=3$, $m=8$ and $(M^8, g)$  admits a Cayley  4-form.    
Hence, a Riemannian manifold $(M,g)$ can be  endowed  with a parallel
$r$-fold   VCP  iff  either $(M^m, g)$ is  an orientable    Riemannian  manifold  and  $r = m-1$; or
$m =2 n$,
$(M^{2n}, g)$ is a K\"ahler manifold  and $r =1$; or    $m = 7$  and $(M^7, g)$ is a  torsion-free
$G_2$-manifold and $r =2$; or $m =8$  and  $(M^8, g)$  is a torsion-free   Spin(7)-manifold  and $r =3$. Once more, this result singles out 
K\"ahler  manifolds, torsion-free $G_2$-and Spin(7)-manifolds  as important  classes   of Riemannian manifolds with special holonomy \cite{Joyce2000}. Not unrelatedly,  these classes play  a prominent  role in  calibrated geometry, string theory and $M$-theory \cite{Joyce2007}.

In \cite{LL2007},  motivated by Brylinsky's   results on the   loop spaces of Riemannian  3-manifolds \cite{Brylinski2008},   Lee  and Leung initiated  the study  of moduli spaces $B^+_e(S,M)$ of unparametrized  embedded  oriented submanifolds in  a finite dimensional   Riemannian manifold $(M^m, g)$,  diffeomorphic to  a closed oriented manifold  $S$,
assuming  that $(M^m,g)$ admits an $r$-fold  VCP $\chi$ and  $\dim S = r-1$.  In particular, they proved  that
if  the associated  VCP-form $\varphi_\chi$ is closed  then $\varphi_\chi$ induces a    weak symplectic   form on $B^+_e(S,M)$,  which is   compatible  with  the weak  $L_2$-metric  on $B^+_e(S,M)$, thus defining an almost K\"ahler structure.
Their  result partially extends   Brylinsky's theorem  for $B^+_e(S^1, M^3)$ and LeBrun's extension  to the case   $B^+_{e} (S,  M)$ when $\codim \, S =2$ \cite{LeBrun1993},  stating  that  if $\codim\, S =2$ then  the induced    weak symplectic   form  on $B^+_e(S, M)$ is formally  K\"ahler, i.e., it is such that the  associated  almost complex structure 
has vanishing  Nijenhuis tensor.  Few years    after  Lee-Leung's work,   Verbitsky  proved  that also the  associated  almost complex  structure $J$  on  $B^+_e(S^1, M^7)$ has vanishing  Nijenhuis tensor  if the associated  VCP-form  $\varphi^3$ is parallel and  therefore 
$M^7$  is a  torsion-free    $G_2$-manifold \cite{Verbitsky2010}.
 Almost at the same time, Henrich gave a new proof of Brylinsky's result  by   showing  that   $J$ is parallel  with respect to  the Levi-Civita connection  of the  $L_2$-metric.

 \begin{remark}\label{rem:lempert} The formal integrability  of  the complex structure $J$ on $B^+_{i,f} (S,M)$  
 	does  not  imply  its  strong  integrability,  since  the Newlander-Nirenberg  theorem  does not hold for  infinite  dimensional manifolds \cite{Lempert1993}.  
	Namely, in the infinite dimensional case if the the Nijenhuis tensor $N_J$ of a complex structure $J$ vanishes, then one still has plenty of
local holomorphic functions, but this does not suffice to construct charts with values in an infinite-dimensional Banach or Fr\'echet space. To overcome this difficulty,  in \cite{Lempert1993}
 	Lempert    introduced  the intermediate notion  of  weak integrability of an  almost complex structure
 	and proved   that  the   almost  complex   structure on the  loop  space $B^+_{i,f} (S^1, M^3)$
 	is   weakly integrable.   Since weak integrability implies  formal  integrability,
 	Lempert's result  provides  in particular a refinement of Brylinsky's proof of the formal integrability  of  the  almost complex structure  $J$  on the  space $B^+_{i,f} (S^1,M^3)$.
\end{remark} 	
 \begin{remark} Lempert's   proof   of the weak integrability of  the almost complex structure $J$ on $B^+_{i,f} (S^1, M^3)$  uses  the twistor  construction  of a CR  5-manifold over  a  3-manifold $M^3$  proposed
 	by LeBrun in \cite{LeBrun1984}. This   twistor  construction  has been    generalized further  by LeBrun  (resp. Verbitsky) for their proof  of formal  integrability  of the  almost complex structure $J$
 	on $B^+_{i,f}(S, M)$  in the case that $\mathrm {codim}\,  S = 2$  (resp.  $ \dim S =1$  \&  $\dim M= 7$).   Brylinski's  original proof  of  the formal   integrability  of  the almost complex structure $J$ on $B^+_{i,f} (S^1, M^3)$ uses instead an ingenious computation  of the
 	Nijenhuis tensor  of $J$.
 \end{remark}
 

In this  paper   we show how Henrich's result admits a natural generalization, allowing us to extend Brylinski's  and Verbitsky's results  to all cases
of  Riemannian  manifolds $(M, g)$  with parallel  VCPs.   In particular, this provides a new proof of Verbitsky's result as  well  as a  new proof of  LeBrun's result. Also in the case of $B^+_e(S^1, M^3)$ our proof simplifies Henrich's original argument, as we show that consideration on the parallelism of the almost complex structure $J$ with respect to a torsion-free affine connection $\nabla^\perp$ alone already allows us to conclude that $J$ is formally integrable. One can then derive from this that $J$ is actually also parallel with respect to the Levi-Civita connection on $B^+_e(S^1, M^3)$ and more generally, as we show, with respect to any torsion-free connection of $B^+_e(S, M)$ under consideration.
\medskip

We denote by  $\Di^+ (S)$   the  group of  orientation  preserving diffeomorphisms of $S$  and by $\Imm_f(S, M)$  the set  of  all smooth  free immersions  $\iota \colon  S \to M$, i.e., 
smooth immersions $\iota$ such that the stabilizer  subgroup       of  $\iota$ in $\Di^+ (S)$ is trivial.  For instance, if
$\iota: S \to M$  is   a  somewhere injective immersion, i.e.  there  exists  $x_0 \in S$ such that $\iota ^{-1} (\iota (x_0))=x_0$, then  $\iota \in  \Imm_f (S,M)$ \cite[Lemma 1.4]{CMM1991}. We will denote by $B^+_{i,f} (S, M)$ the quotient  $\Imm_f (S, M)/\Di^+(S)$.  Note that if $S$ is connected, $\Di^+(S)$ is  an  index two subgroup
of the  group $\Di(S)$  of all diffeomorphisms of $S$, and therefore
$B^+_{i,f} (S,M)$ is a  double covering of $B_{i,f}(S,M):=\Imm_f (S, M)/\Di(S) $.  As a side remark we notice that this latter quotient is what is called a {\it shape space}  in computer  vision community
\cite{BBM2013}, \cite{Michor2016}. 

Having introduced this notation, we can state our 

\begin{theorem}[Main Theorem]\label{thm:formalK}  Assume  that  a Riemannian manifold $(M,g)$ admits a parallel $r$-fold
	VCP  $\chi \in \Om^r(M, TM)$.  Let $S$ be a closed  oriented $(r-1)$-dimensional   manifold.
Then the 	space $B^+_{i,f}(S,M)$ has a structure  of a  formally K\"ahler manifold $(B^+_{i,f}(S,M), J, L_2(g), \om^2)$  where  $J$ is an almost complex  structure  with  vanishing Nijenhuis  tensor, $L_2(g)$ is the  weak  $L_2$-metric, and $\om^2$ is the associated      closed  weak symplectic   2-form. Furthermore  $J$  is parallel  w.r.t. to  the  Levi-Civita  connection $\nabla^{LC}$  of the  weak metric  $L_2(g)$.
\end{theorem}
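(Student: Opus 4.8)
The plan is to reduce the infinite-dimensional problem to a pointwise (or rather fiberwise) statement on the manifold $M$ and then import Henrich's parallelism idea in full generality. First I would set up the geometry of the space $\Imm_f(S,M)$ and its quotient $B^+_{i,f}(S,M)$: the tangent space at an immersion $\iota$ is $\Gamma(\iota^*TM)$, and the tangent space to the quotient at $[\iota]$ is the space $\Gamma(\nu_\iota)$ of sections of the normal bundle $\nu_\iota = \iota^*TM/d\iota(TS)$, equipped with the weak $L_2$-metric $L_2(g)$. The almost complex structure $J$ on $\nu_\iota$ should be defined pointwise by contracting the parallel VCP $\chi \in \Om^r(M,TM)$ with an oriented orthonormal frame $e_1,\dots,e_{r-1}$ of $d\iota(T_xS)$: for a normal vector $w$, set $(Jw) = \chi(e_1,\dots,e_{r-1},w)$ projected to $\nu_\iota$. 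One must check this is independent of the chosen frame (using $\mathrm{SO}(r-1)$-invariance of $\chi$ in its first $r-1$ slots, which follows from the defining identities), well-defined on the normal bundle (i.e. it kills tangential directions and lands in the normal bundle, again from $\la\chi(\dots),e_i\ra=0$), squares to $-\mathrm{Id}$ (this is the standard algebraic fact that a VCP contracted in all but two slots gives a complex structure on the orthogonal complement, valid in all four Brown–Gray cases), is $\Di^+(S)$-equivariant, and is compatible with $L_2(g)$ and with the Lee–Leung weak symplectic form $\om^2$ obtained by integrating $\varphi_\chi$ over $S$.

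The heart of the argument is the vanishing of the Nijenhuis tensor of $J$. Following Henrich's strategy, I would first exhibit a torsion-free affine connection $\nabla^\perp$ on $B^+_{i,f}(S,M)$ — the natural candidate is the connection on the normal bundle induced by the Levi-Civita connection of $g$ on $M$, suitably projected, and then averaged/restricted so as to descend to the quotient. Then I would prove that $J$ is parallel with respect to $\nabla^\perp$. This is where the hypothesis that $\chi$ is \emph{parallel} on $M$ enters decisively: since $\nabla^M\chi = 0$, differentiating the frame-contraction formula for $J$ along a normal variation produces only terms involving $\nabla^M$ of the frame vectors $e_i$, which are tangential corrections that are absorbed into the normal projection; the upshot should be $\nabla^\perp J = 0$. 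Once one has a torsion-free connection for which $J$ is parallel, the Nijenhuis tensor vanishes by the classical formula $N_J(X,Y) = (\nabla^\perp_{JX}J)Y - (\nabla^\perp_{JY}J)X - J(\nabla^\perp_X J)Y + J(\nabla^\perp_Y J)X$ — and this identity holds verbatim in the weak Fréchet-manifold setting because it is purely algebraic once the connection is fixed. This gives formal integrability at one stroke, simultaneously recovering Brylinski, LeBrun and Verbitsky, and explains the remark in the introduction that parallelism of $J$ with respect to \emph{some} torsion-free connection already suffices.

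For the final assertion — that $J$ is in fact parallel with respect to the Levi-Civita connection $\nabla^{LC}$ of the weak metric $L_2(g)$ itself — I would argue as follows. We already have $J$ compatible with $L_2(g)$ (an almost Hermitian structure) and $\om^2$ closed, so $(B^+_{i,f}(S,M), J, L_2(g), \om^2)$ is formally almost Kähler. The general fact that an almost Kähler structure with $\nabla^\perp J = 0$ for some torsion-free $\nabla^\perp$ is automatically Kähler — i.e. $\nabla^{LC}J = 0$ — should be proved via the standard identity (valid for any almost Hermitian manifold) relating $\nabla^{LC}J$ to $d\om$ and $N_J$; with $d\om^2 = 0$ (Lee–Leung) and $N_J = 0$ (just proved) this forces $\nabla^{LC}J = 0$. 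The one point requiring care is that $L_2(g)$ is only a \emph{weak} Riemannian metric, so the existence of its Levi-Civita connection is not automatic; here I would either invoke the known fact that for the $L_2$-metric on spaces of immersions the Levi-Civita connection does exist and is given by an explicit formula (Binz–Fischer, Michor–Mumford), or else note that the relevant Koszul-type formula defines a connection directly, and then carry out the identity computation with that connection.

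The main obstacle I anticipate is not any single hard estimate but rather the careful bookkeeping of normal projections: $J$, $\nabla^\perp$, and the comparison with $\nabla^{LC}$ all live on the normal bundle $\nu_\iota$, which is the quotient $\iota^*TM/d\iota(TS)$ rather than a subbundle, so every formula pulled back from $M$ must be checked to be independent of the choice of splitting and to descend through $\Di^+(S)$; and one must verify that the frame-independence of $J$ survives differentiation. Ensuring that $\nabla^M\chi = 0$ really does kill all the would-be obstructive terms — as opposed to merely the closedness $d\varphi_\chi = 0$ used by Lee–Leung for the weaker almost-Kähler conclusion — is the conceptual crux, and it is exactly the point where the four Brown–Gray cases are handled uniformly.
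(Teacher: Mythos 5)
Your overall architecture coincides with the paper's: a torsion-free connection $\nabla^\perp$ on the normal bundles, parallelism of $J$ with respect to $\nabla^\perp$ deduced from $\nabla^M\chi=0$, vanishing of $N_J$ via the algebraic identity expressing $N_J$ through $(\nabla^\perp J)$ for a torsion-free connection, and finally $\nabla^{LC}J=0$ from the Kobayashi--Nomizu identity relating $\nabla^{LC}J$, $d\om$ and $N_J$ (with the existence of $\nabla^{LC}$ for the weak metric secured by the explicit formula $\nabla^{LC}=\nabla^\perp-\frac12\B$). Those last two steps are fine as you describe them.

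There is, however, a genuine gap at the crucial step $\nabla^\perp J=0$. You claim that differentiating $JY=\bigl(\chi(e_1,\dots,e_{r-1},Y)\bigr)^\perp$ along a normal direction $u$ produces ``terms involving $\nabla^M$ of the frame vectors $e_i$, which are tangential corrections that are absorbed into the normal projection.'' Neither half of this is right. First, $\nabla^M_u e_i$ has both a tangential and a normal component. Second, feeding the tangential component $\sum_j g(\nabla^M_{e_i}u,e_j)e_j$ into the $i$-th slot of $\chi(e_1,\dots,\cdot,\dots,e_{r-1},Y)$ leaves, by antisymmetry, exactly the diagonal term $g(\nabla^M_{e_i}u,e_i)\,JY$, which is \emph{normal} and therefore survives the projection; summing over $i$ gives $\bigl(\sum_i g(\nabla^M_{e_i}u,e_i)\bigr)JY=g(u,W_{[\iota]})JY$ with $W$ a multiple of the mean curvature vector. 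In the paper this term is cancelled not by the normal projection but by the derivative of the normalization factor of the unit $(r-1)$-vector $\overrightarrow{T_sS}$ (the frame obtained by flowing along the variation does not stay orthonormal, and the derivative of $\mathrm{vol}(E_1\wedge\cdots\wedge E_{r-1})^{-1}$ contributes precisely $-g(u,W_{[\iota]})$, by the first-variation-of-volume formula). Third, the remaining terms $\varphi_\chi(e_1,\dots,(\nabla^M_{e_i}u)^\perp,\dots,e_{r-1},v,w)$ involving the \emph{normal} components of the frame derivatives do not vanish for algebraic reasons at all; the paper disposes of them by a tensoriality argument: since $\nabla^\perp_u(JY)$ depends only on the value of $u$ at the point, one may choose the extension of $u$ so that $(\nabla^M_{e_i}u)^\perp=0$ there. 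Without these two ingredients --- the mean-curvature cancellation and the tensoriality trick --- your computation stalls, so the mechanism you propose for the key identity would fail as stated. (A smaller omission: the torsion-freeness of $\nabla^\perp$, which you assert, itself requires an argument --- in the paper it rests on a separate technical lemma about variations generated by the exponential map.)
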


\
{\it Notation  and conventions.}
Through the whole paper  $S$ will always be  a  closed  (i.e., compact without boundary) oriented      finite  dimensional  manifold,  $M$   a    finite dimensional  manifold  and $(M, g)$ a finite dimensional Riemannian manifold, $\Mf$, $\Nf$   smooth  manifolds  modeled  on  convenient vector  spaces, $\X(\Mf)$  the space of smooth (kinematic)  vector fields  on $\Mf$. 
	For a smooth vector bundle $E$ over  a smooth manifold $\Mf$  we denote by $\Gamma (E)$ the space of  all smooth sections of $E$.  For  smooth  manifolds $\Mf$ and $\Nf$  we denote  by $C^\infty(\Mf, \Nf)$  the space  of all smooth mappings from $\Mf$ to $\Nf$.  If  $\Nf = \R$ we     abbreviate $C^\infty(\Mf, \R)$ as $C^\infty(\Mf)$.  For an Euclidean vector bundle $E \to M$ with a fiberwise  metric $h$  and any $u, v\in \Gamma(E)$  we denote
by $h(u, v)$  the smooth function  on $M$ defined  by $h(u, v) (x): = h(u(x), v(x))$ for all $x\in M$.

The paper   is  organized   as follows. In the second section
we collect  some    known  results  on geometry  of
 the space  $B^+_{i,f} (S, M)$  endowed  with the   weak  $L_2$-metric. Then we prove  one of our main technical   results (Lemma \ref{lem:normal}),  which   is needed  for our      establishing 
 the   explicit    formula  of the  Levi-Civita    connection  of the   $L_2$-metric  (Theorem \ref{thm:henrich})   as well  as for  the  proof of  our Main Theorem.   In the  last   section      first  we recall   Lee-Leung's construction  of the  almost complex structure  on  $B^+_{i,j} (S,M)$.  Then  we give   a  proof
 of the   Main Theorem.

\section{Riemannian geometry of  the space  \texorpdfstring{$B^+_{i,f}(S,M)$}{Bif(S,M)}}\label{sec:pre}

In this section,  first  we   fix   necessary notation and   recall some known  results
on the     Fr\'echet  manifold  structure  of  the  moduli   space $B^+_{i,f} (S,M)$.
Then  we  give an explicit  formula  for the  Levi-Civita  connection of the weak $L_2$-metric
on $B^+_{i,f} (S,M)$ (Theorem \ref{thm:henrich}), generalizing to an arbitrary oriented submanifold $S$ results by Henrich for  the cases  $\dim S=1$  and  
${\rm codim}\, S = 1$ \cite[Theorem 3.1, p. 25,  Theorem 5.21, p. 48]{Henrich2009} 
The proof consists in showing that Henrich's connection  $\nabla^\perp$   on the space $\Imm_f (S,M)$ \cite[Definition 5.2, p. 41]{Henrich2009} is torsion-free 
independently of the dimension of $S$. In \cite{Henrich2009} a complete proof of the torsion-freeness of $\nabla^\perp$ is only given for the case   $\dim S =1$ 
\cite[Lemma 2.17, p. 19]{Henrich2009}, but this proof is supposedly immediately generalized/adapted to  the general  case (e.g., in \cite[Definition 5.2, p. 41]{Henrich2009} the connection $\nabla^\perp$ is declared to be torsion-free without the need of a proof). Our  proof of  the torsion-freeness of $\nabla^\perp$  uses Lemma \ref{lem:normal},   which  is   possibly of independent interest and is employed  further  in the proof of the Main Theorem.

\subsection{The smooth  structure  on \texorpdfstring{$B^+_{i,f} (S,M)$}{Bif(S,M)}}\label{subs:smooth}
The   space  $B_{i,f}(S,M)$ has been  considered by Cervera-Mascar\'o-Michor in \cite{CMM1991}, see also \cite[44.2, p. 476]{KM1997} without  the assumption   that $S$ is a closed   oriented    manifold   and  in \cite{MM2005}  with the assumption that $S$  is closed.   They observed that  the  space  $\Imm_f (S,M)$   is an open invariant  subset   in   the  space $C^\infty(S,M)$ of
all smooth maps   from $S$ to $M$ endowed with  compact-open topology   and the  right action of $\Di(S)$. 
If  $S$  is  a  closed  submanifold then  any immersion $\iota: S \to M$ is proper.    In this case,  the results from the above mentioned papers  imply the  following statements (1)-(6). 

(1) $\Imm_f (S,M)$ can be naturally  endowed   with
the  structure   of an infinite   dimensional smooth manifold modeled  on   the  Fr\'echet  space  of  smooth   sections  $\Gamma (\iota ^* TM)$  along a smooth immersion
$\iota: S \to M$, see
e.g. \cite[Theorem 42.1, p. 439]{KM1997}.  

(2) $\Imm_f (S,M)$  is   the total  space  of a smooth
principal  fiber bundle  $\pi: \Imm_f (S,M) \to B_{i,f} (S,M)$  whose fiber is $\Di(S)$,   and   hence $\Imm_f (S,M)$  is also the total  space
of   a smooth   principal  fiber bundle  $\pi^+: \Imm_f (S,M)  \to B^+_{i,f} (S,M)$ whose
fiber is $\Di^+(S)$.

(3) $B_{i,f}(S,M)$ and $B^+_{i,f} (S,M)$ are Hausdorff  spaces in the quotient  topology.

(4)  For an immersion  $\iota:S\to M$ let
$N_\iota S$ be the normal bundle  $\iota^*TM/TS$,  whose fiber  at $s\in S$  can be identified with 
the  orthogonal complement  $T_{\iota(s)}\iota(S)^\perp$   of $T_{\iota (s)}\iota(S)$ in $T_{\iota(s)}M$  if  $M$ is  a  Riemannian manifold.
Then  $B^+_{i,f}(S,M)$ inherits the structure  of  a smooth Fr\'echet manifold locally modeled
on  the Fr\'echet  space $\Gamma (N_\iota S)$  of smooth   sections  of $N_\iota S$, which       is  identified
with   the kinematic tangent  space $T_{[\iota]}B^+_{i, f}(S, M)$, where   $[\iota]: =\pi^+(\iota)$. 


(5) The kinematic tangent  bundle $TB^+_{i,f}(S,M)$ has the structure  of  a smooth     vector  bundle
over  $B^+_{i,f} (S,M)$.    The  Lie bracket  of  two   kinematic  vector  fields, i.e., of two smooth sections
of $TB^+_{i,f}  (S,M)$,  is well-defined \cite[Theorem 32.8, p. 327]{KM1997}.

(6) Let $E \to  B^+_{i, f}(S,M) $ be a smooth vector  bundle, whose fiber $F$  is a  convenient
vector  space, e.g.,  $F = \R^n$. Then   we denote by   $L^k_{alt}  (TB^+_{i,f}(S,M), E)$  the  smooth bundle  over $B^+_{i,f}(S,M)$
whose fiber  over  $[\iota] \in B^+_{i,f} (S,M)$  consists  of   all  alternating
 bounded  $k$-linear maps  from   $T_{[\iota]}B^+_{i,f}(S,M) \times  \cdots  _{k\, times}\times T_{[\iota]}B^+_{i,f}(S,M)$ to $E_{[\iota]}$. We set
$$\Om^k(B^+_{i,f}(S,M), E):= \Gamma (L^k_{alt} (TB^+_{i,f}(S,M),E))$$
 and call it the space   of differential forms on $B^+_{i,f} (S,M)$  with values  in   $E$ \cite[\S 33.22, p. 352]{KM1997}.  If $E  = B^+_{i,f}(S, M) \times  \R$  is the trivial vector bundle with fiber $\R$ over  $ B^+_{i,f}(S, M) $ then
 we   abbreviate $\Om^k(B^+_{i,f}(S,M), E)$ as $\Om^k(B^+_{i,f}(S,M))$.
 
 \

From now on we    shall omit   the adjective   ``kinematic"  before  ``tangent  vector (field)".

\

 \begin{example}\label{ex:trans} (cf. \cite{LL2007,  Vizman2011})   Let   $\varphi \in   \Om^k (M)$  and   $\dim  S = r$.   Denote by $p_2: S \times \Imm_f (S,M) \to \Imm_f (S,M)$ the projection onto the second  factor.
 Since the  evaluation map 
 \begin{align*}
 ev: S \times \Imm_f (S, M) &\to M\\
  (s,\iota)&\mapsto \iota(s)
 \end{align*}
   is  smooth \cite[Corollary 3.1.3(1), p. 31]{KM1997}, for  any    $\varphi \in \Om^k(M)$   we have
 $ev^*(\varphi) \in \Om^k(S \times  \Imm_f (S,M))$. Next, using the   integration over fiber, which is a differential version of the slant map, we can push  $ev^*(\varphi)$   to a   form
 $(p_2)_* (ev)^*(\varphi) \in \Om^{k-r}(\Imm_f(S,M))$. It is not hard to see that
 $(p_2)_* (ev)^*(\varphi) = (\pi^+)^*(\varphi_B)$ where  $\varphi _B\in   \Om^{k-r}(B^+_{i,f}(S,M))$ is defined by
 \begin{equation}\label{eq:push}
 \varphi_B(X_1, \cdots, X_{k-r}):= \int _S i_{X_{1}}\cdots  i_{X_{k-r}}\varphi\bigr\vert_S,
 \end{equation}
 for  $X_1, \cdots, X_{k-r} \in \Gamma (N_\iota S)$ where $\varphi\bigr\vert_S\in \Gamma(S,\bigwedge^r\iota^*TM^*)$ is the restriction of $\varphi$ to $S$.
 If  $d\varphi =0$ then  $ d (p_2)_* (ev)^* (\varphi) = 0$    and therefore  $d\varphi_B = 0$.
 The  form $\varphi_B$ is called  the {\it transgression}   of $\varphi$.
 	\end{example}

 \begin{example}\label{ex:transv} (cf. \cite{LL2007,  Vizman2011})   Let   $\varphi \in   \Om^k (M, TM)$  and   let $\dim  S = r$. For any immersion $\iota\colon S\to M$, let  $\mathrm{vol}_{\iota^*g}$ be the volume form of $S$ associated with the induced metric $\iota^*g$, and  let $\overrightarrow {T_sS\iota}$ denote  the   unit  $r$-vector of  $T_sS$ with respect to this volum form. 
 Let $\Pi: T_{\iota(s)}M \to  N_{\iota(s)}S$ be the natural projection. Then we define the    form  $\varphi_B \in \Om^{k-r} (B^+_{i,f}S,M), TB^+_{i,f}(S,M))$
 by
 	\begin{equation}\label{eq:push2}
  \varphi_B  (X_1, \cdots, X_{k-r})(s): =\Pi\left( i_{X_1} \cdots   i_{X_{k-r}}(i_{\overrightarrow {T_sS\iota}}\varphi\bigr\vert_S)\right)
 	\end{equation}
 for  $X_1, \cdots, X_{k-r} \in \Gamma (N_\iota S)$.
 The form $\varphi_B$
  is called   the {\it transgression}  of  $\varphi$
 defined by the induced  volume form on $S$.
  \end{example}
 

\subsection{The    \texorpdfstring{$L_2$}{L2}-Riemannian metric  and its  Levi-Civita connection  on \texorpdfstring{$B^+_{i,f}(S,M)$}{Bif(S,M)}}\label{subs:aff}

In this subsection  we assume that  $(M, g)$ is a  Riemannian  manifold.
It is known  that the  space  $B^+_{i,f} (S, M)$  is endowed with   the weak  $L_2$- Riemmannian metric    
defined as follows
\begin{equation}\label{eq:l2}
\langle u,v\rangle_{[\iota]}=\int_S g(u,v) \mathrm{vol}_{\iota^*g},
\end{equation}
where $u,v\in \Gamma(N_\iota S)=T_{[\iota]}B^+_{i,f}(S,M)$. 
The RHS of (\ref{eq:l2})  extends naturally  to  a  $L_2$-metric  on  the space $\Imm_f (S,M)$, which is invariant under the  action of  $\Di^+(S)$  and therefore    it descends to   the $L_2$-metric  on $B^+_{i,f} (S,M)$  making the projection  $\pi^+\colon \Imm_f (S,M) \to B^+_{i,f}(S,M)$ a Riemannian submersion.
Kainz  proved the  existence    of the Levi-Civita
connection for the  $L_2$-metric    on the    space $\Imm_f(S,  M)$  \cite[Theorem 2.2]{Kainz1984}. Since  the  $L_2$-metric on $\Imm_f(S, M)$ is  $\Di^+ (S)$-invariant,  the associated  Levi-Civita   connection is  also  $\Di^+(S)$-invariant. A  formula  for the  Levi-Civita
connection  on $B^+_{i,f}(S,M)$  has been given by Michor and Mumford via the   equation for geodesics on $B^+_{i,f}(S,M)$ \cite[\S 4.2]{MM2005}   and explicitly  by
  Henrich in his PhD Thesis   for the
case  $\dim S =1$ or $\codim\, S = 1$ \cite[Theorem 3.1, Theorem 5.21]{Henrich2009}.   In  the remaining part of this subsection
we  shall   generalize  Henrich's formula for an arbitrary  closed  oriented   manifold $S$.

  First we recall  the notion of  an {\it affine connection} and its associated  {\it   covariant  differentiation}  on   a  smooth  possibly infinite dimensional  manifold  $\Mf$  modelled
  on a convenient vector  space, following \cite{KM1997}, \cite{MMM2013}, and \cite{Kainz1984}.
Denote by $\X(\Mf)$ the space  of smooth vector  fields on $\Mf$.
\begin{definition}
	\label{def:aff}   An affine connection  on   a   smooth  manifold $\Mf$  is a smooth mapping
	$$\nabla:\X (\Mf) \times  \X(\Mf) \to \X(\Mf)$$
which is   a derivation   on the  second   argument  and $C^\infty(\Mf)$-linear   on the 
first  argument. In other words,   re-denoting   $ \nabla (X, Y)$ as $\nabla _X (Y)$, we have
\begin{eqnarray}
\nabla _X (Y+Z) = \nabla _X(Y)  +  \nabla _X(Z)\\
 \nabla _X (fY) = f \nabla _X(Y)  +  (Xf) Y \label{eq:leibniz1}\\
\nabla _{f X + g Y}  (Z) = f\nabla_{X} Y +  g \nabla_{Y} Z \label{eq:lin1}
\end{eqnarray}
for all  $ X, Y, Z \in \X (\Mf)$ and $  f, g \in C^\infty (\Mf)$.
The operator $\nabla _X$ is  called  a {\it  covariant  differentiation } w.r.t. $X$. If the manifold $\Mf$ is equipped with a Riemannian metric
\[
\langle\,,\,\rangle\colon   \X (\Mf) \times \X (\Mf) \to C^\infty(\Mf),
\]
then the covariant  derivation is called  {\it Levi-Civita}, if   it respects  the metric and it is torsion-free, i.e.,
\begin{eqnarray}
X \la Y,  Z\ra  = \la \nabla_X  Y, Z \ra + \la Y, \nabla _X Z \ra\label{eq:LC1}\\
T^\nabla (X, Y) : = \nabla_X Y - \nabla _Y X - [X, Y] =0. \label{eq:LC2}
\end{eqnarray} 
As in the finite dimensional case, the above conditions determine $\nabla$ uniquely, so that, when it exists we talk of  {\it the Levi-Civita connection}.
\end{definition}

\begin{remark}\label{rem:LV}
	   It is not hard to   see that the above definition of  an  affine connection on $\Mf$ is equivalent to  the definition  of an  affine connection   via a   smooth projection $K: T^2 \Mf \to T\Mf$ \cite[Definition 37.2, p. 376]{KM1997}. 

Furthermore, the   notions   of an affine   connection and of the associated covariant dfferentiation on $\Mf$ are particular cases   of  those  of a  linear connection  on a smooth vector  bundle  $E$  over $\Mf$ and of its associated covariant   differentiation  $\nabla : \X(\Mf)\times  \Gamma(E) \to \Gamma(E)$      \cite[37.27, 37.28, p. 397]{KM1997}.
\end{remark}

\begin{remark}\label{rem:nabla-on-paths}
As $(X,Y)\mapsto \nabla_XY$ is $C^\infty(\Mf)$-linear in the first variable, the datum of a covariant derivative $\nabla$ is equivalent to the datum of all the derivatives 
\[
\nabla_u\colon \X(\Mf) \to T_p\Mf
\]
 with $u\in T_p\Mf$ ranging in the tangent  space of $\Mf$ at $p$, with $p$ varying over the whole of $\Mf$.
\end{remark}

\begin{remark}
The torsion of an affine connection is a tensor, i.e., the value $T^\nabla(X,Y)_p$ of the vector field $T^\nabla(X,Y)$ at the point $p$ of $\Mf$ only depends on the values $X_p$ and $Y_p$ of $X$ and $Y$ at $p$. In other words, one has a well defined bilinear map $u\otimes v\mapsto T^\nabla_p(u,v)$ defined by
\[
T^\nabla_p(u,v)=T^\nabla(X,Y)_p
\]
where $X$ and $Y$ are arbitrary extensions of the tangent vectors $u$ and $v$ at the point $p$, respectively, to tangent vector fields on a 2-dimensional surface in $\Mf$.
\end{remark}

Next, following  \cite{MM2005}, cf.\cite{Henrich2009},  we  shall      express  the Levi-Civita connection   $\nabla^{LC}$ on $B^+_{i,f}(S, M)$ w.r.t.  the  $L_2$-metric   as  a  sum  of a  torsion-free affine connection $\nabla^\perp $ on $B^+_{i,f}(S, M)$   and a  symmetric tensor  $\B$. 
  By Remark  \ref{rem:nabla-on-paths},  to define $\nabla ^\perp$   it   suffices  to  define
$\nabla_{u}^\perp$ for every tangent vector $u$ in $T_{[\iota]}B^+_{i,f}(S, M)$. Let $\gamma\colon \mathbb{R}\to \Imm_f(S,M)$ be a path of  free immersions with $\gamma(0)=\iota$ whose velocity vector $\dot{\gamma}\vert_{t=0}$ at $t=0$ is a lift/a representative of the tangent vector $u$  in  $\Gamma(\iota^*TM)$.  For all $k \in \N$ we have isomorphisms
\begin{equation}\label{eq:cartesian}
C^\infty (\R^{k} , C^\infty (S,M)) = C^\infty  (\R^{k} \times  S, M).
\end{equation}
By means of this for $k=1$,
we identify     $\gamma$  with a map, which we will denote by the same symbol,  $\gamma: \R \times S \to  M$,
\cite[Corollary 3.13, p. 31]{KM1997}.
At any point $s$ of $S$, $\gamma(\cdot, s)$ defines a curve in $M$ whose velocity vector $u (s)$ at $t=0$ is a vector in $T_{\gamma(0,s)} M$.  
 For any vector field $X$ on $B^+_{i,f}(S, M)$,    and   for any fixed $t$, the vector $X_{[\gamma(t)]}$ can be identified with a section of the normal bundle $N_{\gamma(t)}S$. 
By identifying  $N_{\gamma(t)}S$ with a subbundle of the tangent bundle $(\gamma(t))^* TM$, e.g., by means of the Riemannian metric as in  Subsection \ref{subs:smooth} (4),
 we can think of $X_{[\gamma]}:=X\circ\gamma$ as a section of $\gamma^*TM$. The pullback of the Levi-Civita connection on $TM$ gives a connection $\gamma^*(\nabla^{LC;M})$ on $\gamma^*TM$ and we can define
\begin{equation}\label{eq:perp1}
(\nabla_{u}^\perp X):=(\gamma^*(\nabla^{LC;M})_{\frac{\partial}{\partial t}} X_{[\gamma]}\bigr\vert_{t=0})^\perp, 
\end{equation}
where on the right hand side $(-)^\perp$ denotes its projection on the normal bundle $N_{[\iota]}S$. 

 \begin{proposition} \label{prop:pull1}(cf. \cite[Lemma 2.17]{Henrich2009})  The  formula  (\ref{eq:perp1})  defines  a    torsion-free  connection  on   $B^+_{i,f}(S, M)$.
 \end{proposition}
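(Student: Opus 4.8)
The plan is to verify the two defining properties of a torsion-free affine connection from Definition \ref{def:aff}: that $\nabla^\perp$ as given by (\ref{eq:perp1}) is a well-defined derivation in the second slot, $C^\infty$-linear in the first slot, and that the torsion $T^{\nabla^\perp}$ vanishes. The first task, well-definedness, is the subtlest bookkeeping point: one must check that the right-hand side of (\ref{eq:perp1}) does not depend on the choice of the path $\gamma$ lifting $u$, nor on the choice of lift of the vector field $X$ to a section of $\gamma^*TM$. Independence of the lift of $X$ follows because two lifts differ by a section that is tangent to $\iota(S)$ at $t=0$, and we project to the normal bundle at the end; one also needs that changing $\gamma$ (keeping $\gamma(0)=\iota$ and $\dot\gamma|_{t=0}$ fixed modulo tangent-to-$S$ terms) changes $(\gamma^*\nabla^{LC;M})_{\partial_t}X_{[\gamma]}|_{t=0}$ only by a term tangent to $\iota(S)$, which again dies under $(-)^\perp$. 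This last point is exactly where \textbf{Lemma \ref{lem:normal}} is meant to enter: it controls how the normal projection of an ambient covariant derivative along a variation of immersions behaves, and in particular shows the $S$-tangential ambiguity is harmless. The algebraic properties—additivity and the Leibniz rule (\ref{eq:leibniz1}) in $Y$, and $C^\infty$-linearity (\ref{eq:lin1}) in $X$—then follow from the corresponding properties of the pulled-back Levi-Civita connection $\gamma^*(\nabla^{LC;M})$ on $\gamma^*TM$, since $(-)^\perp$ is a bundle map and hence $C^\infty(S)$-linear, and the first-slot linearity is visible directly because $\nabla^\perp_u$ depends on $u$ only through the velocity $\dot\gamma|_{t=0}$.

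For torsion-freeness I would work on a two-parameter variation. Given tangent vectors $u,v$ at $[\iota]$, extend them to commuting vector fields and represent the pair by a map $f\colon \R^2\times S\to M$ with $f(0,0,\cdot)=\iota$, so that $X_{[\iota]}$ is represented by $\partial_{t_1}f|_0$ and $Y_{[\iota]}$ by $\partial_{t_2}f|_0$. Then
\[
T^{\nabla^\perp}(X,Y)=\bigl(f^*(\nabla^{LC;M})_{\partial_{t_1}}\partial_{t_2}f - f^*(\nabla^{LC;M})_{\partial_{t_2}}\partial_{t_1}f\bigr)^\perp - [X,Y],
\]
and the parenthesized ambient expression equals $f^*(\nabla^{LC;M})$ applied symmetrically to the two coordinate fields, which vanishes because $\nabla^{LC;M}$ is torsion-free on $M$ and $[\partial_{t_1},\partial_{t_2}]=0$; what remains is to identify the normal projection of $[\partial_{t_1}f,\partial_{t_2}f]$ (the ambient bracket of the two variation fields) with the Lie bracket $[X,Y]$ of the induced vector fields on $B^+_{i,f}(S,M)$, again up to a term tangent to $\iota(S)$ that is killed by $(-)^\perp$. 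This identification of the bracket on the quotient manifold with the $S$-tangentially-corrected ambient bracket is the place where Lemma \ref{lem:normal} does real work, and it is the step I expect to be the main obstacle: one must be careful about the passage through the $\Di^+(S)$-quotient, i.e., that the tangent vectors on $B^+_{i,f}(S,M)$ are normal sections and that the Lie bracket computed among their lifts, projected to the normal bundle, is well defined and agrees with (\ref{eq:perp1})'s antisymmetrization.

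A slightly different but equivalent route, which I would mention as an alternative, is to avoid explicit bracket computations by invoking the characterization of the torsion as a tensor (the Remark following Definition \ref{def:aff}): it suffices to evaluate $T^{\nabla^\perp}_{[\iota]}(u,v)$ on arbitrary extensions of $u,v$ along a $2$-dimensional surface in $B^+_{i,f}(S,M)$, for which the coordinate-vector-field model $f\colon\R^2\times S\to M$ above is tailor-made, and where the ambient bracket term is literally zero. Either way, the content is: pull back everything to $\gamma^*TM$ (or $f^*TM$), use that $\nabla^{LC;M}$ is metric and torsion-free on the finite-dimensional manifold $M$, and use Lemma \ref{lem:normal} to see that normal projection commutes appropriately with the operations involved so that the identities descend to $B^+_{i,f}(S,M)$ with no obstruction coming from the dimension of $S$. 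This is the sense in which the proof is "independent of $\dim S$," repairing the gap in \cite{Henrich2009} noted in the text.
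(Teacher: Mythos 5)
Your overall strategy (evaluate the torsion on a two\--parameter variation, use tensoriality of the torsion, and exploit that the coordinate fields commute and that $\nabla^{LC;M}$ is torsion\--free) is the same as the paper's, but there is a genuine gap at the central step. Your displayed formula for $T^{\nabla^\perp}(X,Y)$ tacitly identifies $\nabla^\perp_XY$ with $\bigl(f^*(\nabla^{LC;M})_{\partial_{t_1}}\partial_{t_2}f\bigr)^\perp$. This is not what (\ref{eq:perp1}) computes: the vector field $Y$ on $B^+_{i,f}(S,M)$ along the variation is represented by the \emph{normal projection} $(\partial_{t_2}f)^{\mathrm{ver}}$, not by the full variation field $\partial_{t_2}f$, so
\[
\nabla^\perp_XY=\Bigl(f^*(\nabla^{LC;M})_{\partial_{t_1}}\partial_{t_2}f\Bigr)^\perp-\Bigl(f^*(\nabla^{LC;M})_{\partial_{t_1}}(\partial_{t_2}f)^{\mathrm{hor}}\Bigr)^\perp .
\]
Your justification for discarding the second term --- that it is ``tangent to $\iota(S)$ and killed by $(-)^\perp$'' --- is exactly where the argument fails: the ambient covariant derivative of a \emph{tangential} field is not tangential (its normal component is a second\--fundamental\--form type quantity), so the projection does not kill it. The same flawed reasoning appears in your well\--definedness paragraph. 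This is precisely the term that Lemma \ref{lem:normal} is designed to control, and it does so not by a projection argument but by choosing the specific exponential variation $\gamma(x,y,s)=\Exp_{\iota(s)}(x\,u(s)+y\,v(s))$: there the full ambient derivative $\gamma^*(\nabla^{LC;M})_{\partial_x}Y|_0$ vanishes, and the horizontal parts $Y^{\mathrm{hor}}$, $Z^{\mathrm{hor}}$ vanish at the base point and are $O(x)$, so the relevant pairing $g(Y^{\mathrm{hor}},Z^{\mathrm{hor}})$ is $O(x^2)$ and its first derivative at $0$ vanishes. Without this quadratic\--vanishing argument the proof does not close.

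You have also misassigned the role of Lemma \ref{lem:normal}: you expect it to be needed for identifying the Lie bracket on the quotient, but that step is immediate in the paper --- $X$ and $X^{\mathrm{ver}}$ are $\pi^+$\--related, so $[X^{\mathrm{ver}},Y^{\mathrm{ver}}]$ is $\pi^+$\--related to $[X,Y]=0$ and hence vanishes. The actual content of the lemma is the vanishing $\nabla^\perp_{X^{\mathrm{ver}}}Y^{\mathrm{ver}}|_{[\iota]}=\nabla^\perp_{Y^{\mathrm{ver}}}X^{\mathrm{ver}}|_{[\iota]}=0$ for the exponential variation, after which torsion\--freeness is immediate. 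To repair your write\--up, either carry the correction term $\bigl(f^*(\nabla^{LC;M})_{\partial_{t_1}}(\partial_{t_2}f)^{\mathrm{hor}}\bigr)^\perp$ explicitly and prove it vanishes for the exponential variation, or simply invoke tensoriality to reduce to that variation from the start, as the paper does.
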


\begin{proof} One immediately sees that (\ref{eq:perp1}) defines an affine connection, so we are only left with showing that 
the torsion $T^\perp(u,v)$ of $\nabla ^\perp$ vanishes  for any $u, v \in  T_{[\iota]}B^+_{i,f}(S,M)= N_{\iota}S$. To do this 
we  shall use the following  argument, which shall be utilized
several times  in our  paper.

 We consider a deformation 
 $$\gamma:  (-\eps, \eps) \times (-\eps, \eps) \times S \to M, \; (x,y,s)\mapsto \gamma (x,y,s),$$
    such that  for all $s\in S$, 
    $$\gamma (0, 0, s)=  \iota (s),\quad  {\frac{d\gamma}{dx}}\biggr\vert_{(0,0,s)}= u(s), \quad\text{  and }\quad {\frac{d \gamma}{dy}}\biggr\vert_{(0,0,s)}= v(s).$$
    
 More  explicitly we   set
 \begin{equation}\label{eq:Exp}
 \gamma (x, y, s): = \Exp_{\iota(s)}\,  (x\cdot u(s)+ y\cdot v(s)),
 \end{equation}
 where $\Exp_{\iota(s)}:  T_{\iota(s)}M \to M$ is the exponential map for the Riemannian manifold $M$ at the point $\iota(s)$.
This deformation extends the    vectors  $u, v \in T_{[\iota]}B^+_{i,f}(S,M)=N_\iota S$  to   vector  fields,  
	that we denote by $X$ and $Y$, along 
	the image  of ${\gamma} ((-\eps, \eps) \times (-\eps, \eps))$ in  $\Imm_{f} (S,M)$   via  the isomorphism (\ref{eq:cartesian}),
i.e., the  vector field $X$ is defined by 
	 \begin{equation}\label{eq:ExpX}
	X_{\gamma(x,y)}(s)=(d\Exp_{\iota(s)})\vert_{x\,u(s)+y\, v(s)}(u(s))
	\end{equation}
	and similarly for $Y$. 		
	The projection $\pi^+: \Imm_f (S, M) \to B^+_{i,f} (S,M)$ maps  $X,Y$ to vector fields $X^{\mathrm{ver}}, Y^{\mathrm{ver}}$ along  the  image of $\pi^+ \circ  {\gamma}((-\eps, \eps) \times (-\eps, \eps))$.
	By definition  we   can identify  $X^{\mathrm{ver}}(\pi^+ \circ {\gamma}(x,y))(s)$  with  the   projection
	of $X(s)$  on the  normal  bundle $N_{{\gamma}(x,y,s)}S$, i.e.,  we have  the following decomposition
	\begin{equation}\label{eq:ver}
	X({\gamma}(x,y)) = X^{\mathrm{ver}}(\pi^+\circ{\gamma}(x,y))  +  X ^{\mathrm{hor}}(\pi^+\circ{\gamma}(x,y)),
	\end{equation}
	where $X^{\mathrm{hor}}(\pi^+\circ{\gamma}(x,y)) \in   \X({\gamma}(x,y)S)$. By construction, the vector fields $X^{\mathrm{ver}}, Y^{\mathrm{ver}}$ extend the tangent vectors $u,v$, so the torsion $T^\perp(u,v)$ is computed by $T^\perp(u,v)=\left(\nabla^\perp_{X^{\mathrm{ver}}} Y^{\mathrm{ver}} - \nabla^\perp _{Y^{\mathrm{ver}}} X^{\mathrm{ver}} - [X^{\mathrm{ver}}, Y^{\mathrm{ver}}]\right)_{[\iota]}$. Since  $X^{\mathrm{ver}} = (\pi^+)_* X$  and $[X, Y]= 0$, we have  $[X^{\mathrm{ver}}, Y^{\mathrm{ver}}] = 0$. The conclusion of the proof is then immediate from the following Lemma  \ref{lem:normal}.
	\end{proof}
	
	\begin{lemma}\label{lem:normal}  Assume that  $X^{\mathrm{ver}}, Y^{\mathrm{ver}}$ are generated by $\gamma$ defined  in  (\ref{eq:Exp})  and (\ref{eq:ver}). Then  we have 
		\begin{equation}
		\label{eq:vanish1}
		\nabla_{X^{\mathrm{ver}}}^\perp {Y^{\mathrm{ver}}}_{[\iota]} =\nabla_{Y^{\mathrm{ver}}}^\perp {X^{\mathrm{ver}}}_{[\iota]}=0.
		\end{equation} 
	\end{lemma}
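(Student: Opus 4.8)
The plan is to compute the covariant derivative $\nabla^\perp_{X^{\mathrm{ver}}}Y^{\mathrm{ver}}$ at $[\iota]$ directly from the defining formula \eqref{eq:perp1}, exploiting the very explicit nature of the deformation $\gamma$ in \eqref{eq:Exp}. First I would fix a point $s\in S$ and look at the curve $x\mapsto \gamma(x,0,s) = \Exp_{\iota(s)}(x\,u(s))$ in $M$: by construction this is the geodesic of $(M,g)$ through $\iota(s)$ with initial velocity $u(s)$. The vector field $Y_{\gamma(x,0)}(s) = (d\Exp_{\iota(s)})|_{x\,u(s)}(v(s))$ is, along this geodesic, precisely the Jacobi field determined by the variation $y\mapsto \Exp_{\iota(s)}(x\,u(s)+y\,v(s))$ of geodesics; equivalently, $Y_{\gamma(x,0)}(s)$ is the restriction to the $x$-axis of the coordinate vector field $\partial_y$ of the map $(x,y)\mapsto \Exp_{\iota(s)}(x\,u(s)+y\,v(s))$. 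The key classical fact I would invoke is that, at the center $x=0$, the covariant derivative of this Jacobi field along the geodesic equals $v(s)$'s ``radial derivative'', which vanishes: more precisely $\gamma^*(\nabla^{LC;M})_{\partial/\partial x}Y_{[\gamma]}\big|_{(0,0)}(s) = \gamma^*(\nabla^{LC;M})_{\partial/\partial y}X_{[\gamma]}\big|_{(0,0)}(s)$ by symmetry of the Levi-Civita connection (vanishing torsion, applied to the two-parameter map $\gamma(\cdot,\cdot,s)$), and then the latter is the derivative at $x=0$ of $X_{\gamma(0,y)}(s) = (d\Exp_{\iota(s)})|_{y\,v(s)}(u(s))$ transported along $y\mapsto \Exp_{\iota(s)}(y\,v(s))$; but this is again a Jacobi-type expression whose value I can pin down. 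Actually the cleanest route: the map $(x,y)\mapsto \Exp_{\iota(s)}(x\,u(s)+y\,v(s))$ has the property that at the origin all first covariant derivatives of the coordinate fields $\partial_x,\partial_y$ vanish, because in normal coordinates centered at $\iota(s)$ this map is $(x,y)\mapsto x\,u(s)+y\,v(s)$ up to higher order and the Christoffel symbols vanish at the origin. Hence $\gamma^*(\nabla^{LC;M})_{\partial/\partial x}X_{[\gamma]}\big|_{(0,0)}(s)=\gamma^*(\nabla^{LC;M})_{\partial/\partial x}Y_{[\gamma]}\big|_{(0,0)}(s)=0$, and symmetrically with the roles of $x$ and $y$ exchanged.

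From here the proof is short. Taking the normal projection $(-)^\perp$ of the zero vector still gives zero, so by \eqref{eq:perp1} we get $\nabla^\perp_{X^{\mathrm{ver}}}Y^{\mathrm{ver}}\big|_{[\iota]}=0$, where I must be slightly careful: \eqref{eq:perp1} is phrased in terms of the section $X_{[\gamma]}$ of $\gamma^*TM$ obtained by identifying $N_{\gamma(t)}S$ with the $g$-orthogonal complement of $T\gamma(t)S$, i.e. in terms of $X^{\mathrm{ver}}$ rather than $X$. So I would use the decomposition \eqref{eq:ver}, $X(\gamma(x,y)) = X^{\mathrm{ver}}(\pi^+\circ\gamma(x,y)) + X^{\mathrm{hor}}(\pi^+\circ\gamma(x,y))$, differentiate both sides covariantly in $x$ at the origin, take the normal component, and observe that the left-hand side contributes $0$ by the computation above. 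This leaves $(\gamma^*(\nabla^{LC;M})_{\partial/\partial x}X^{\mathrm{ver}}_{[\gamma]}\big|_{0})^\perp = -(\gamma^*(\nabla^{LC;M})_{\partial/\partial x}X^{\mathrm{hor}}_{[\gamma]}\big|_{0})^\perp$, so the claim $\nabla^\perp_{X^{\mathrm{ver}}}Y^{\mathrm{ver}}=0$ reduces to showing that the normal component of the covariant $x$-derivative of the tangential field $X^{\mathrm{hor}}$ vanishes at $[\iota]$. I would handle this by noting that $X^{\mathrm{hor}}(\pi^+\circ\gamma(0,0))=0$ (at $y=x=0$ the field $X$ equals $u$, which is already normal, so its horizontal part vanishes there — indeed $X^{\mathrm{hor}}$ vanishes along the whole $(x,y)=(0,\cdot)$ and $(\cdot,0)$ loci? no: only its value at the base point is needed), wait — more carefully, I only need that at the single point $\gamma(0,0,s)$ the tangential-to-$S$ part of the derivative projects to zero in the normal direction, which follows because the covariant derivative of a tangential field, when the field itself vanishes at the point of evaluation, is computed pointwise and... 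Let me instead argue: $X^{\mathrm{hor}}_{\gamma(x,0)}(s) \in T_{\gamma(x,0,s)}(\gamma(x,0)(S))$, so $\nabla^{LC;M}_{\partial/\partial x}X^{\mathrm{hor}}$ at $x=0$ differs from the derivative of a tangential field along a curve in the submanifold; its normal component is, by the definition of the second fundamental form of $\iota(S)\subset M$ together with the fact that $X^{\mathrm{hor}}_{\gamma(0,0)}$ is the $T\iota(S)$-component of $u$, which is $0$ since $u\in N_\iota S$. Hence the second fundamental form term is evaluated on the zero vector and vanishes, and the tangential derivative has no normal component. Therefore $\nabla^\perp_{X^{\mathrm{ver}}}Y^{\mathrm{ver}}\big|_{[\iota]}=0$, and by the symmetry $x\leftrightarrow y$ of \eqref{eq:Exp} also $\nabla^\perp_{Y^{\mathrm{ver}}}X^{\mathrm{ver}}\big|_{[\iota]}=0$.

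\textbf{Main obstacle.} The genuinely delicate point is bookkeeping the difference between $X$ (a section of $\gamma^*TM$, with no normality built in) and $X^{\mathrm{ver}}$ (its normal projection, which is what \eqref{eq:perp1} actually takes the covariant derivative of), and making rigorous the claim that the normal component of $\nabla^{LC;M}_{\partial/\partial x}X^{\mathrm{hor}}\big|_{0}$ vanishes. I expect that this last vanishing is best phrased via the second fundamental form $\mathrm{II}$ of the immersion: for a vector field $Z$ tangent to the family of submanifolds $\gamma(x,0)(S)$, the normal component of $\nabla^{LC;M}_{\partial/\partial x}Z$ at $x=0$ is $\mathrm{II}_\iota(\,\partial_x\gamma|_0^{\mathrm{tan}},\, Z|_0\,)$ plus a term involving the $x$-derivative of the embedding data, and since $Z|_0 = X^{\mathrm{hor}}_{\gamma(0,0)} = (u)^{\mathrm{tan to }\iota(S)} = 0$ (because $u$ is normal), this vanishes. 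A secondary, purely technical obstacle is justifying that the normal-coordinate Taylor expansion argument — "Christoffel symbols vanish at the center of a normal chart, hence $\nabla_{\partial_x}\partial_x = \nabla_{\partial_x}\partial_y = 0$ at the origin for the map $(x,y)\mapsto \Exp_p(xu+yv)$" — is legitimate; this is standard but I would spell it out since it is the computational heart of the lemma and is exactly where the choice \eqref{eq:Exp} of $\gamma$ (rather than an arbitrary deformation) is used.
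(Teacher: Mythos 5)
Your proof is correct and its skeleton coincides with the paper's: both rest on (i) the observation that the exponential deformation \eqref{eq:Exp} forces $\gamma^*(\nabla^{LC;M})_{\partial/\partial x}Y_{[\gamma]}\vert_{(0,0)}=0$ (the paper's \eqref{eq:vanish3}, which you justify by the vanishing of the Christoffel symbols at the center of a normal chart --- exactly the computation the paper leaves implicit), and (ii) the reduction via the decomposition \eqref{eq:ver} to showing that the normal component of $\gamma^*(\nabla^{LC;M})_{\partial/\partial x}Y^{\mathrm{hor}}_{[\gamma]}\vert_{0}$ vanishes, which in both proofs ultimately comes from $Y^{\mathrm{hor}}\vert_{(0,0)}=0$. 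You diverge only in how this last vanishing is finished. The paper pairs against an arbitrary $w\in\Gamma(N_\iota S)$, extends $w$ to a third vertical field $Z$ via the three-parameter deformation \eqref{eq:Exp3} so that $\nabla_{\partial/\partial x}Z\vert_0=0$, converts the pairing into $\tfrac{\partial}{\partial x}g(Y^{\mathrm{hor}},Z^{\mathrm{hor}})\vert_0$, and kills it because both factors are $O(x)$ (equation \eqref{eq:taylor1}). You instead argue directly that the covariant derivative at $x=0$ of a field that is everywhere tangent to the moving submanifolds and vanishes at $x=0$ is itself tangent to $\iota(S)$; this is more economical (no third parameter needed) and the conclusion is right, but your appeal to the second fundamental form is not the correct formalization: $\mathrm{II}$ takes two arguments tangent to the submanifold, whereas here the differentiation direction $\partial_x\gamma\vert_0=u$ is normal to it, so there is no second fundamental form term to speak of. The clean justification of your step is the Leibniz computation: write $Y^{\mathrm{hor}}(x)=\sum_i a_i(x)E_i(x)$ with $E_i(x)$ a smooth frame of $T_{\gamma(x,0,s)}(\gamma(x,0)(S))$ along the curve and $a_i(0)=0$; then $\nabla_{\partial/\partial x}Y^{\mathrm{hor}}\vert_0=\sum_i a_i'(0)E_i(0)\in T_{\iota(s)}\iota(S)$, whose normal projection is zero. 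This is in effect the paper's factorization $Y^{\mathrm{hor}}=x\cdot Y'$ applied to one field rather than to the product of two, so the two routes buy essentially the same thing; yours avoids the auxiliary field $Z$, the paper's avoids having to track how the tangent planes $T(\gamma(x,0)(S))$ vary with $x$.
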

	\begin{proof}  
	As the statement is symmetric in $X^{\mathrm{ver}}$ and $Y^{\mathrm{ver}}$, we only need to show that $\nabla_{X^{\mathrm{ver}}}^\perp {Y^{\mathrm{ver}}}_{[\iota]} =0$.
	To prove   Lemma \ref{lem:normal}, using (\ref{eq:perp1}), it suffices
		to show  that for  any  $w \in \Gamma (N_\iota S)$  and  any $s\in S$ we  have  
		\begin{equation}\label{eq:vanish2}
		g (\gamma^*(\nabla^{LC;M})_{\frac{\partial}{\partial x}} {Y^{\mathrm{ver}}}_{[\gamma]}{|_{(x,y)= (0,0)}}, w)_{s} = 0.
		\end{equation}
		Since $\gamma$ is a restriction of the  exponential map $\Exp:  TM \to M$    at  $\iota(S)$, we have
		\begin{equation}\label{eq:vanish3}
		\gamma^*(\nabla^{LC;M})_{\frac{\partial}{\partial x}} Y_{[\gamma]}|_{(x,y)= (0,0)}= 0.
		\end{equation}
		Using \ref{eq:vanish3}, to  prove  (\ref{eq:vanish2})  it suffices to show  that
		
		\begin{equation}\label{eq:vanish4}
		g( \gamma^*(\nabla^{LC;M})_{\frac{\partial}{\partial x}} Y^{\mathrm{hor}}_{[\gamma]}{|_{(x,y)= (0,0)}}, w)_s = 0.
		\end{equation}
	Abusing  the same notation $\gamma$,  now we   let  $\gamma$ be a map     from  $(-\eps, \eps)\times (-\eps,\eps )\times (-\eps, \eps) \times  S  \to   M$  defined  by (cf. (\ref{eq:Exp}))
	\begin{equation}\label{eq:Exp3}
	\gamma(x,y,z, s): =\Exp_{\iota(s)}(x\cdot u(s)+ y \cdot v(s) + z \cdot w(s) ),
	\end{equation}
	and let $Z$ be the vector field along the image of $\gamma$ defined in analogy to $X$ and $Y$.
		Then,  as in (\ref{eq:vanish3}),  we have
	 \begin{equation}\label{eq:vanish6}
	\gamma^*(\nabla^{LC;M})_{\frac{\partial}{\partial x}} Z_{[\gamma]}|_{(x,y,z)= (0,0,0)} =0.
	\end{equation}
and so equation (\ref{eq:vanish4}) is equivalent to
	\begin{equation}\label{eq:vanish4.5}
		\gamma^*(\nabla^{LC;M})_{\frac{\partial}{\partial x}}g ( Y^{\mathrm{hor}}_{[\gamma]}, Z_{[\gamma]})_{|(x,y,z)=(0,0,0)}=0.
		\end{equation}
Noting that $g(Y^{\mathrm{hor}}, Z^{\mathrm{ver}})=0$, to  prove  (\ref{eq:vanish4})	  it suffices  to  show  that
		\begin{equation}\label{eq:vanish5}
		\gamma^*(\nabla^{LC;M})_{\frac{\partial}{\partial x}}g ( Y^{\mathrm{hor}}_{[\gamma]}, Z^{\mathrm{hor}}_{[\gamma]})_{|(x,y,z)=(0,0,0)}=0.
		\end{equation}
		Since  $ Y\circ \gamma: (-\epsilon,\epsilon)^3\times  S \to  TM$  is a differentiable map,  the  composition  $ Y^{\mathrm{hor}} \circ \gamma : (-\eps, \eps)^3 \times S \to TM$ is a differentiable  map. Since  
		$Y^{\mathrm{hor}}\circ \gamma (0,0, 0, s) = 0$, we   can  write  for $x \in [-\eps/2, \eps/2]$  
		\begin{equation}\label{eq:taylor1}
		Y^{\mathrm{hor}}\circ\gamma(x,0,0, s)= x \cdot Y'(x, 0,0, s)  
		\end{equation}
		where $Y'(x, 0,0, s) \in  T_{\gamma (x, 0,0, s)}M$  and  $|Y ' (x, 0, 0, s)| \le   A(s) $  for some   positive  number  $A(s) \in \R$.  Since   $S$ is compact,   we can choose  $A(s)$
		independent     of $s$. The same argument applies to $Z\circ\gamma$.
		This   yields  (\ref{eq:vanish5})  immediately  and completes
		the proof of Lemma \ref{lem:normal}
	\end{proof}

Next we shall  define  the desired   symmetric tensor  $\B$. As $(M,g)$ is a Riemannian manifold and $S$ is 
oriented, we have a volume form map

\begin{align*}
\mathrm{vol}_S\colon \mathrm{\Imm}_{i,f}(S,M)&\to \Omega^{\dim S}(S;\mathbb{R})\\
\iota&\mapsto \mathrm{vol}_{\iota^*g},
\end{align*}
where $\mathrm{vol}_{\iota^*g}\in \Omega^{\dim S}(S;\mathbb{R})$ denotes the volume form on $S$ associated with the pullback metric $\iota^*g$. As $\mathrm{vol}_S$ is a smooth function on $\mathrm{\Imm}_{i,f}(S,M)$ with values in a vector space, we can take its derivative $X\mathrm{vol}_S$ with respect to a smooth vector field on $\mathrm{\Imm}_{i,f}(S,M)$ and this will again be a smooth $\Omega^{\dim S}(S;\mathbb{R})$-valued function on $\mathrm{\Imm}_{i,f}(S,M)$.
If $\tilde{\iota}=\iota\circ\phi$, then $\phi\colon (S,\iota^*g)\to (S,\tilde{\iota}^*g)$ is an isometry. This implies that the equation
\begin{equation}\label{eq:gradient}
g(X_{[\iota]}, W_{[\iota]}) (\mathrm{vol}_S)_{[\iota]} =(X\mathrm{vol}_S)_{[\iota]} 
\end{equation}
for any vector field $X$ on  $B^+_{i,f}(S,M)$, is well-defined.   Namely, the equivariance conditions $(\mathrm{vol}_S)_{\tilde{\iota}}=\phi^*((\mathrm{vol}_S)_{\iota})$ and $X_{\tilde{\iota}}=\phi^*X_{\iota}$ imply that  a   solution  $W$ is $\mathrm{Diff}^+(S)$-equivariant as well.  

By the very definition of the mean curvature vector field for an immersed submanifold we have the following.

\begin{lemma}\label{lem:MCV}  The equation  \ref{eq:gradient}  has  a unique solution  $W_{[\iota]}= -   (\dim S) H_{\iota(S)}$, where
	$H_{\iota(S)}$ is the mean curvature   of the immersed  submanifold  $\iota(S)$, defined by
	\[
	H_{\iota(S)}=\frac{1}{\dim S} \sum_{i=1}^{\dim S}   (\nabla^{LC;M}_{e_i}     e_i )^\perp,
	\]
	where the $e_i$ are orthonornal tangent vector fields on a neighborhhod of a point $s$ in $S$ and $\perp$ denotes the projection on the normal bundle.
	 The solution  $W_{[\iota]}=-(\dim S)H_{\iota(S)}$ defines a vector field $W$ on $B^+_{i,f}(S,M)$.
\end{lemma}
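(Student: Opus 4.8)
The plan is to compute the derivative $X\mathrm{vol}_S$ explicitly in terms of the first variation of the volume form and then match it against the left-hand side of \eqref{eq:gradient}. First I would fix an immersion $\iota\colon S\to M$ and a path $\gamma\colon(-\eps,\eps)\to\mathrm{\Imm}_{i,f}(S,M)$ with $\gamma(0)=\iota$ and $\dot\gamma(0)=X_{[\iota]}=X_\iota$, viewing $\gamma$ as a map $(-\eps,\eps)\times S\to M$ via \eqref{eq:cartesian}. Working in local coordinates on $S$ with the induced metrics $g_{ij}(t):=(\gamma(t)^*g)_{ij}$, one has $\mathrm{vol}_{\gamma(t)^*g}=\sqrt{\det(g_{ij}(t))}\,dx^1\wedge\cdots\wedge dx^{\dim S}$, and Jacobi's formula gives
\begin{equation*}
\frac{d}{dt}\Big|_{t=0}\mathrm{vol}_{\gamma(t)^*g}=\frac{1}{2}g^{ij}\,\dot g_{ij}\,\mathrm{vol}_{\iota^*g}.
\end{equation*}
The standard first-variation computation for $\dot g_{ij}=\frac{d}{dt}\big|_{t=0}g(\partial_i\gamma,\partial_j\gamma)$, using the symmetry of the pullback connection $\gamma^*(\nabla^{LC;M})$ (so that $\nabla_{\partial/\partial t}\partial_i\gamma=\nabla_{\partial_i}\dot\gamma$) and splitting $X_\iota$ into tangential and normal parts, yields $\frac12 g^{ij}\dot g_{ij}=\operatorname{div}_S(X_\iota^{\mathrm{top}})-(\dim S)\,g(X_\iota^\perp,H_{\iota(S)})$, where $X_\iota^{\mathrm{top}}$ and $X_\iota^\perp$ are the components of $X_\iota$ tangent and normal to $\iota(S)$, and $H_{\iota(S)}$ is the mean curvature vector as defined in the statement.

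Next I would observe that when $X$ is a vector field on $B^+_{i,f}(S,M)$ its representative $X_\iota$ lies in $\Gamma(N_\iota S)$, i.e.\ $X_\iota=X_\iota^\perp$ and $X_\iota^{\mathrm{top}}=0$; hence the divergence term drops out and
\begin{equation*}
(X\mathrm{vol}_S)_{[\iota]}=-(\dim S)\,g(X_{[\iota]},H_{\iota(S)})\,(\mathrm{vol}_S)_{[\iota]}.
\end{equation*}
Comparing with \eqref{eq:gradient}, which must hold as an identity of $\Omega^{\dim S}(S;\R)$-valued quantities for all $W$ satisfying it, this forces $g(X_{[\iota]},W_{[\iota]})=-(\dim S)\,g(X_{[\iota]},H_{\iota(S)})$ pointwise on $S$ for every $X$; since the $L_2$-pairing is (weakly) nondegenerate on $\Gamma(N_\iota S)$ — indeed the pointwise inner products $g(\cdot,\cdot)_s$ already separate points of the fiber $N_{\iota(s)}S$ — we conclude $W_{[\iota]}=-(\dim S)H_{\iota(S)}$, which is the claimed unique solution. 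The fact that this $W$ is $\mathrm{Diff}^+(S)$-equivariant, hence descends to a well-defined vector field on $B^+_{i,f}(S,M)$, follows exactly as noted before the statement: if $\tilde\iota=\iota\circ\phi$ then $\phi$ is an isometry $(S,\iota^*g)\to(S,\tilde\iota^*g)$, and mean curvature is a metric invariant, so $H_{\tilde\iota(S)}$ and $H_{\iota(S)}$ correspond under $\phi$; alternatively it is immediate from uniqueness since the defining equation \eqref{eq:gradient} is equivariant.

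The only real point requiring care — and the step I expect to be the main obstacle — is justifying the first-variation formula rigorously in the convenient-calculus / weak-Riemannian setting, namely that $\mathrm{vol}_S$ is genuinely a smooth map into $\Omega^{\dim S}(S;\R)$ and that its directional derivative is computed by the pointwise (in $s\in S$) classical formula. This is where the compactness of $S$ and the smoothness of the evaluation map $ev\colon S\times\mathrm{\Imm}_f(S,M)\to M$ enter: they guarantee that differentiation under the integral/along $S$ is legitimate and that $\dot g_{ij}$ depends smoothly on all data. The rest — Jacobi's formula, the torsion-free identity $\nabla_{\partial_i}\dot\gamma=\nabla_{\partial/\partial t}\partial_i\gamma$, and the divergence-theorem bookkeeping on the compact manifold $S$ — is the routine classical computation of the first variation of area, and I would only indicate it rather than carry it out in full. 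One may also simply invoke, as the statement's preamble suggests (``by the very definition of the mean curvature vector field''), the well-known first-variation-of-area formula and deduce the claim in one line; I would present the short self-contained derivation for completeness but lean on the classical result for the analytic underpinnings.
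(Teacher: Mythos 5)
Your proposal is correct and follows exactly the route the paper intends: the paper gives no written proof, merely asserting the lemma ``by the very definition of the mean curvature vector field,'' i.e.\ it invokes the classical first-variation-of-volume formula that you spell out (Jacobi's formula, torsion-freeness of the pullback connection, vanishing of the tangential/divergence term since $X_{[\iota]}\in\Gamma(N_\iota S)$, and pointwise nondegeneracy of $g$ to extract $W_{[\iota]}=-(\dim S)H_{\iota(S)}$). The signs and the equivariance argument both check out, so your write-up is simply a more detailed version of the paper's one-line justification.
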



Consider now the bilinear form 
\[
\B:  T B^+_{i,f} (S, M) \times   TB^+_{i,f} (S, M)   \to TB^+_{i,f} (S,M)
\]
defined by
\begin{equation}
\B(u, v)_{[\iota]} =  g(  u, v) W_{[\iota]} - g( u,W_{[\iota]})  v  - g ( v, W_{[\iota]})  u  \label{eq:b}
\end{equation}
for any $u,v\in T_{[\iota]}B^+_{i,f}(S,M)$,where $W$ is the multiple of the mean curvature vector field defined in Lemma \ref{lem:MCV}, and where we use the $C^\infty(S;\mathbb{R})$-module structure on $T_{[\iota]} B^+_{i,f} (S, M)=N_\iota S$.

The following theorem   generalizes to an arbitrary $S$ results by Henrich for  the cases  $\dim S=1$  and  
${\rm codim}\, S = 1$ \cite[Theorem 3.1, p. 25,  Theorem 5.21, p. 50]{Henrich2009}.
\begin{theorem}\label{thm:henrich}  The      covariant  derivation $\nabla ^\perp  - {\frac{1}{2}}  \B $  is  the Levi-Civita    covariant    derivation on $B^+_{i,f}(S, M)$.
\end{theorem}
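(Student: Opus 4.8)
The plan is to verify the two defining properties of a Levi-Civita connection for $\nabla^{LC}:=\nabla^\perp-\tfrac12\B$: torsion-freeness and metric compatibility. Torsion-freeness is almost immediate: $\nabla^\perp$ is torsion-free by Proposition \ref{prop:pull1}, and the correction term contributes $\tfrac12(\B(u,v)-\B(v,u))$ to the torsion, which vanishes because the formula \eqref{eq:b} for $\B$ is manifestly symmetric in $u$ and $v$. So the entire content of the theorem is the metric-compatibility identity \eqref{eq:LC1}, namely
\[
X\la Y,Z\ra=\la\nabla^{LC}_XY,Z\ra+\la Y,\nabla^{LC}_XZ\ra
\]
for all vector fields $X,Y,Z$ on $B^+_{i,f}(S,M)$; uniqueness of the Levi-Civita connection (Definition \ref{def:aff}) then forces $\nabla^{LC}$ to be \emph{the} Levi-Civita connection, whose existence is already guaranteed by Kainz's theorem together with the Riemannian submersion $\pi^+$.

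To prove metric compatibility I would work at a fixed $[\iota]$ and, using Remark \ref{rem:nabla-on-paths}, reduce to computing with a tangent vector $u=X_{[\iota]}$; I would represent everything by the explicit deformation $\gamma(x,s)=\Exp_{\iota(s)}(x\,u(s))$ as in \eqref{eq:Exp}, \eqref{eq:ExpX}, lifting $Y,Z$ to vector fields along $\gamma$ in $\Imm_f(S,M)$ and then identifying the normal components $Y^{\mathrm{ver}},Z^{\mathrm{ver}}$ with sections of $\gamma^*TM$. The key computation is to differentiate
\[
\la Y,Z\ra_{[\gamma(x)]}=\int_S g\bigl(Y^{\mathrm{ver}}_{[\gamma]},Z^{\mathrm{ver}}_{[\gamma]}\bigr)\,\mathrm{vol}_{\gamma(x)^*g}
\]
in $x$ at $x=0$. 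By the Leibniz rule this splits into two pieces: the term where $\partial/\partial x$ hits the integrand $g(Y^{\mathrm{ver}},Z^{\mathrm{ver}})$, and the term where it hits the volume form $\mathrm{vol}_{\gamma(x)^*g}$. For the first term I would use that the pullback Levi-Civita connection $\gamma^*(\nabla^{LC;M})$ is metric, so $\partial_x g(Y^{\mathrm{ver}},Z^{\mathrm{ver}})=g(\gamma^*\nabla^{LC;M}_{\partial_x}Y^{\mathrm{ver}},Z^{\mathrm{ver}})+g(Y^{\mathrm{ver}},\gamma^*\nabla^{LC;M}_{\partial_x}Z^{\mathrm{ver}})$; then I must project the $M$-covariant derivatives onto the normal bundle. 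Here Lemma \ref{lem:normal} is the essential input: writing $\gamma^*\nabla^{LC;M}_{\partial_x}Y^{\mathrm{ver}}=(\gamma^*\nabla^{LC;M}_{\partial_x}Y^{\mathrm{ver}})^\perp+(\gamma^*\nabla^{LC;M}_{\partial_x}Y^{\mathrm{ver}})^{\rm tan}=\nabla^\perp_uY+(\cdots)^{\rm tan}$, the tangential part pairs with the normal vector $Z^{\mathrm{ver}}$ to give zero pointwise, so only $g(\nabla^\perp_uY,Z^{\mathrm{ver}})$ survives — and similarly for the $Z$ slot. Integrating, the first term yields $\la\nabla^\perp_XY,Z\ra+\la Y,\nabla^\perp_XZ\ra$.

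The second term, where $\partial/\partial x$ falls on the volume form, is where the tensor $\B$ enters. By \eqref{eq:gradient} and Lemma \ref{lem:MCV}, $\partial_x\,\mathrm{vol}_{\gamma(x)^*g}\big|_{x=0}=(X\,\mathrm{vol}_S)_{[\iota]}=g(X_{[\iota]},W_{[\iota]})\,(\mathrm{vol}_S)_{[\iota]}$, so this piece contributes $\int_S g(Y,Z)\,g(u,W)\,\mathrm{vol}_{\iota^*g}=\la g(u,W)\,Y,Z\ra$ (using the $C^\infty(S)$-module structure), which one checks equals $\la -\tfrac12\B(u,Y),Z\ra-\la Y,-\tfrac12\B(u,Z)\ra$: indeed from \eqref{eq:b},
\[
-\tfrac12\B(u,Y)=-\tfrac12 g(u,Y)W+\tfrac12 g(u,W)Y+\tfrac12 g(Y,W)u,
\]
and pairing $-\tfrac12\B(u,Y)$ with $Z$ plus $Y$ with $-\tfrac12\B(u,Z)$, the $g(u,Y)W$ and $g(u,Z)W$ and $g(\cdot,W)u$ cross-terms combine to leave exactly $g(u,W)\,g(Y,Z)$. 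Assembling the two terms gives precisely $\la\nabla^{LC}_XY,Z\ra+\la Y,\nabla^{LC}_XZ\ra = X\la Y,Z\ra$, as desired. The main obstacle is organizing the second term carefully — making sure the equivariance/well-definedness of $W$ from Lemma \ref{lem:MCV} is correctly used so that $g(u,W)$ really is the right coefficient of the variation of the volume form, and that the pointwise $C^\infty(S)$-linear algebra matching $\B$ to this coefficient is done without sign errors; the first term, by contrast, is a routine consequence of Lemma \ref{lem:normal} and the metricity of $\nabla^{LC;M}$.
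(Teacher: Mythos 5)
Your proposal is correct and follows essentially the same route as the paper: torsion-freeness from the symmetry of $\B$, and metric compatibility by differentiating the $L_2$-pairing along an exponential deformation, splitting via the Leibniz rule into the covariant-derivative term (where orthogonality kills the tangential part against the normal field) and the volume-variation term, which is matched to $-\tfrac12\B$ through the mean-curvature gradient $W$ exactly as you describe. The only cosmetic point is that the step you credit to Lemma \ref{lem:normal} is really just the orthogonal decomposition of $\iota^*TM$ paired against a normal vector; in the paper Lemma \ref{lem:normal} is invoked for the torsion-freeness of $\nabla^\perp$ (Proposition \ref{prop:pull1}), not in the metric-compatibility computation.
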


\begin{proof} By uniqueness, we only need to show that  $\nabla ^\perp  - {\frac{1}{2}}  \B$ is torsion-free and compatible with the Riemannian metric on $B^+_{i,f}(S, M)$. The torsion-freeness is immediate, as $\nabla ^\perp$ is torsion-free, and  $\B$ is    a symmetric bilinear  form.   Next, we show that  
	\[
	X \la Y,  Z\ra   = \la \nabla^\perp_X  Y, Z \ra + \la Y, \nabla^\perp _X Z \ra  + {-\frac{1}{2}}(\la \B(X,Y), Z \ra + \la Y, \B(X,Z) \ra)
	\]
	holds  for any $X, Y, Z \in \X (B^+_{i,f}(S, M))$. As the difference between the left and the right hand side of the above equation is a tensor, without loss of generality we can assume that  $X, Y, Z $ are generated by 
	 a three-parameter  variation 
	$\bar \gamma: \R^3 \to B^+_{i,f} (S, M)$  with  
	$\bar \gamma (0,0,0) = [\iota]$,  e.g., as in (\ref{eq:Exp3}).  The map $\bar \gamma$  lifts  to
	a smooth  map $ \gamma: (-\epsilon,\epsilon)^3 \to \Imm_f(S, M)$    defined  by   a smooth map
	\begin{align*}
	\gamma : (-\epsilon,\epsilon) \times (-\epsilon,\epsilon) \times (-\epsilon,\epsilon) \times S &\to M,
	\\  
	(x, y, x, s) &\mapsto \gamma (x, y, z, s),\\
	\end{align*}  
with $\gamma (0, 0, 0, s) = \iota (s)$	and such  that $\gamma$  is purely normal at $(x,y,z)=(0,0,0)$. 
Then we can assume  at every $(x,y,z)$ we have
	$$ X = \left(\frac{d \gamma }{dx}\right)^{\mathrm{ver}}, \:   Y = \left(\frac{d \gamma}{dy}\right)^{\mathrm{ver}}, \:  Z = \left(\frac{d \gamma }{dx}\right)^{\mathrm{ver}}, $$
and find	
	\begin{align*}\label{eq:metric}  
	\left(X \la Y, Z \ra\right)_{[\iota]} &= \frac{d}{dx}  \int_S g(Y_{[\gamma]},Z_{[\gamma]})  (\mathrm{vol}_S)_\gamma\biggr\vert_{(x,y,z)=(0,0,0)}
	\\
	&=
	\int_S \left( \frac{d}{dx}  g(Y_{[\gamma]},Z_{[\gamma]})  \right) (\vol_S)_\gamma\biggr\vert_{(x,y,z)=(0,0,0)}
	+   \int_S g(Y_{[\gamma]},Z_{[\gamma]}) \left(\frac{d}{dx} (\mathrm{vol}_S)_\gamma\right)\biggr\vert_{(x,y,z)=(0,0,0)}\\
	&=
	\int_S \left(  g( \gamma^*(\nabla^{LC;M})_{\frac{\partial}{\partial x}} Y_{[\gamma]}),Z_{[\gamma]}) +g( Y_{[\gamma]}, \gamma^*(\nabla^{LC;M})_{\frac{\partial}{\partial x}} Z_{[\gamma]}))   \right) (\mathrm{vol}_S)_\gamma\biggr\vert_{(x,y,z)=(0,0,0)}\\
	&\qquad\qquad
	+   \int_S g(Y_{[\gamma]},Z_{[\gamma]}) \left(\frac{d}{dx} (\mathrm{vol}_S)_\gamma\right)\biggr\vert_{(x,y,z)=(0,0,0)}\\
	&=
	\int_S \left(  g( (\nabla^\perp_X Y)_{[\iota]},Z_{[\iota]}) +g( Y_{[\iota]}, (\nabla^\perp_XZ)_{[\iota]}   \right) (\vol_S)_{[\iota]}
	+   \int_S g(Y_{[\iota]},Z_{[\iota]}) (X\mathrm{vol}_S)_{[\iota]}\\
	&=\langle \nabla^\perp_X Y,Z\rangle_{[\iota]} + \langle  Y, \nabla^\perp_XZ\rangle_{[\iota]} +\int_S g( X_{[\iota]}, W_{[\iota]}) g(Y_{[\iota]},Z_{[\iota]})  (\mathrm{vol}_S)_{[\iota]}.
	\end{align*}
	In the above equation we used that, as $Z_{[\iota]}=Z_{[\iota]}^{\mathrm{ver}}$, we have
	\begin{align*}
	g( \gamma^*(\nabla^{LC;M})_{\frac{\partial}{\partial x}} Y_{[\gamma]}),Z_{[\gamma]})\biggr\vert_{(x,y,z)=(0,0,0)}&=g( (\gamma^*(\nabla^{LC;M})_{\frac{\partial}{\partial x}} Y_{[\gamma]}))^\perp,Z_{[\gamma]})\biggr\vert_{(x,y,z)=(0,0,0)}\\
	&=g( (\nabla^\perp_X Y)_{[\iota]},Z_{[\iota]}).
	\end{align*}
	Therefore, we are reduced to showing that
	\[
	\int_S g(X_{[\iota]},W_{[\iota]})  g(Y_{[\iota]},Z_{[\iota]})(\mathrm{vol}_S)_{[\iota]} =-\frac{\la \B(X,Y), Z \ra + \la Y, \B(X,Z) \ra}{2},
	\]
which is straightforward from the definition of $\B$.
\end{proof}

\begin{remark}\label{rem:LC}

 In \cite[(8)]{MMM2013}, see also \cite[\S 4.2]{Michor2016},   Micheli, Michor, and Mumford give more generally an existence condition for  the Levi-Civita   covariant  derivative   on  a  smooth manifold $M$  modeled on convenient locally convex vector
	spaces and 
	  endowed with a weak Riemannian  metric $g$, i.e.  a symmetric positive definite bilinear form $g$ such that $g^\flat_x : T_xM \to T^\ast_xM$
	  is only injective for each $x\in M$,  in terms of  symmetric gradients with respect to $g$.
\end{remark}

\section{The  formally  K\"ahler  structure of higher   dimensional  knot spaces}\label{sec:formal}

In this  section we give a  proof of our  Main Theorem \ref{thm:formalK}. First we  recall  Lee-Leung's construction
of  the almost complex structure  $J$ on $B^+_{i,f}(S,M)$  associated with a VCP on  $M$. Then  we  derive  Theorem \ref{thm:formalK}  from
Corollary \ref{cor:tor1} and  Proposition \ref{prop:almost-complex-connection}.

  Denote
by  $Gr^+(r,M)$  the    Grassmanian  bundle   over $(M,g)$ whose fiber over $p\in  M $  consists  of  the Grassmanian manifold of oriented
$r$-dimensional subspaces   in $T_pM$.  For  $ v\in Gr^+_p(r,M)$ let us denote by $\overrightarrow{v}$  the oriented unit $r$-vector associated   to $v$ and to the Riemannian metric $g$ of $M$, and by $v ^\perp$ the   oriented    orthogonal  complement  to $v$ in $T_pM$.
 Lee-Leung made  the following  simple  but  crucial observation  \cite[p. 146]{LL2007}
 \begin{lemma}\label{lem:jcvp} Let  $\chi \in \Om^{r+1}(M, TM)$ be a VCP. Then for  any $v \in Gr^+_p(r, M)$  and  any  $\xi \in v^\perp\subset T_pM$ we have
 $$i_{\overrightarrow v}\chi_p (\xi) \in v^\perp.$$
 Furthermore, the restriction    of $i_{\overrightarrow v}\chi_p$ to $v^\perp$ is a  1-fold VCP on $v^\perp$,  denoted   by $J(\chi, v)$,  which satisfies  the following  relation    for any $\xi, \zeta \in v^\perp$
 \begin{equation}\label{eq:sympl}
 i_{\overrightarrow v}(\varphi_{\chi})_p (\xi, \zeta) = g_p (J(\chi,v)\xi, \zeta),
 \end{equation}
 where $\varphi_\chi$ is the VPC-form of $\chi$ (equation (\ref{eq:varphi-chi})). 
 \end{lemma}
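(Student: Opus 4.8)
The plan is to reduce the whole statement to linear algebra at the single point $p$, where $\chi_p$ is a linear $(r+1)$-fold VCP on the Euclidean space $(T_pM,g_p)$, so that everything follows from the two defining identities of a linear VCP. First I would fix an oriented orthonormal basis $e_1,\dots,e_r$ of $v$, so that $\overrightarrow v=e_1\wedge\cdots\wedge e_r$, and introduce the endomorphism
\[
A:=i_{\overrightarrow v}\chi_p\colon T_pM\to T_pM,\qquad A\xi=\chi_p(e_1,\dots,e_r,\xi).
\]
A preliminary observation is that $A$ does not depend on the chosen oriented orthonormal basis of $v$: any two such bases differ by an element of $SO(r)$, which fixes $e_1\wedge\cdots\wedge e_r$, and $\chi_p$ is alternating and multilinear; this makes $A$, and hence $J(\chi,v)$ below, well defined.

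For the first assertion I would apply the first VCP axiom $\la\chi_p(v_1,\dots,v_{r+1}),v_i\ra=0$ with $(v_1,\dots,v_{r+1})=(e_1,\dots,e_r,\xi)$: for $i=1,\dots,r$ this gives $\la A\xi,e_i\ra=0$, so $A\xi\perp v$, i.e. $A\xi\in v^\perp$ for every $\xi$ (in particular for $\xi\in v^\perp$); note also that $A$ vanishes on $v$ itself, since then $\chi_p$ has a repeated entry. Thus $A$ restricts to an endomorphism $J:=J(\chi,v):=A|_{v^\perp}$ of $v^\perp$. To identify $J$ as a $1$-fold VCP on $v^\perp$ I would check the two VCP axioms in the case $r=1$: the identity $\la J\xi,\xi\ra=\la\chi_p(e_1,\dots,e_r,\xi),\xi\ra=0$ is the $i=r+1$ instance of the first axiom, and the second axiom gives $\|J\xi\|^2=\|e_1\wedge\cdots\wedge e_r\wedge\xi\|^2$, which equals the Gram determinant of $\{e_1,\dots,e_r,\xi\}$; since the $e_i$ are orthonormal and $\xi\in v^\perp$, that Gram matrix is block-diagonal with blocks $I_r$ and $\|\xi\|^2$, whence $\|J\xi\|^2=\|\xi\|^2$. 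Hence $J$ is a $1$-fold VCP on $v^\perp$; polarizing $\la J\xi,\xi\ra=0$ shows $J$ is skew-symmetric and $\|J\xi\|=\|\xi\|$ shows it is orthogonal, so $J^2=-\mathrm{id}_{v^\perp}$, i.e. $J$ is a $g_p$-compatible complex structure (and in particular $\dim v^\perp$ is even).

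Finally, for the compatibility with the VCP-form I would just unwind (\ref{eq:varphi-chi}): for $\xi,\zeta\in v^\perp$,
\[
i_{\overrightarrow v}(\varphi_\chi)_p(\xi,\zeta)=(\varphi_\chi)_p(e_1,\dots,e_r,\xi,\zeta)=\la\chi_p(e_1,\dots,e_r,\xi),\zeta\ra=\la A\xi,\zeta\ra=g_p(J\xi,\zeta),
\]
which is (\ref{eq:sympl}). The only delicate points are keeping the index conventions straight (in this lemma the Grassmannian is of oriented $r$-planes while $\chi$ is $(r+1)$-fold, a shift relative to the notation of the introduction, where $\chi$ is $r$-fold and $\dim S=r-1$) and the basis-independence of $A$; neither is a genuine obstacle, so I expect the argument to be short and entirely self-contained once the fiberwise reduction is in place.
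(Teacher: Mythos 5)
Your proof is correct. It is worth noting, though, that the paper does not actually prove Lemma \ref{lem:jcvp} at all: it attributes the observation to Lee--Leung and remarks only that the lemma ``is a consequence of Gray's theorem'' \cite[Theorem 2.6]{Gray1969}, so your argument is not so much a different route as the only explicit one on the table. Your reduction to pointwise linear algebra is exactly the right move: the basis-independence of $A=i_{\overrightarrow v}\chi_p$ under $SO(r)$ changes of oriented orthonormal basis, the use of the first VCP axiom with $i\le r$ to get $A\xi\in v^\perp$ and with $i=r+1$ to get $\la J\xi,\xi\ra=0$, the block-diagonal Gram determinant computation giving $\|J\xi\|=\|\xi\|$ for $\xi\in v^\perp$, and the one-line unwinding of (\ref{eq:varphi-chi}) to obtain (\ref{eq:sympl}) are all sound, and you correctly track the index shift ($\chi$ is $(r+1)$-fold here while $v$ is an $r$-plane). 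The added observation that polarization yields $J^2=-\mathrm{id}$ is a useful bonus the lemma itself does not state but which the paper implicitly relies on when calling $J$ an almost complex structure. What your approach buys is self-containedness and transparency; what the paper's citation buys is brevity and a pointer to Gray's more general structure theory, of which this fiberwise statement is a special case.
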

We observe that  Lemma \ref{lem:jcvp}  is a consequence  of  Gray's theorem \cite[Theorem 2.6]{Gray1969}.
\begin{remark}
As 1-VCP on Euclidean vector spaces are equivalent to linear K\"ahler structures, one is naturally led to considering the
Hermitian  vector  bundle $E(\chi)$   over  $Gr^+(r,M)$  whose fiber  over $v$  is $v^\perp$  with the 
Hermitian   structure  $(g, J(\chi, v))$.  As Chern classes are deformation invariants of the complex structure on a real vector bundle, the Chern classes   of $E(\chi)$  are deformation  invariants  of the 
VCP  structure $\chi$ on $M$. A detailed investigation of these and other deformation invariants of a VCP  structure with emphasis on $G_2$- and  torsion-free  Spin(7)-manifolds, as well as a comparison with known  deformation  invariants  of    $G_2$-structures \cite{CGN2018} will appear elsewhere.
\end{remark}

\

Assume the dimension of the closed oriented manifold $S$ is $\dim_{\mathbb{R}}S=r$. 
At each point $(s,\iota)$ in $S\times \Imm_f (S, M)$, the image $\iota_*T_sS$ of the tangent space $T_sS$ in $T_{\iota(s)}M$ defines an element in $Gr^+(r,M)$, so we have defined a smooth map
\[
v\colon S\times \Imm_f (S, M) \to Gr^+(r,M).
\]
By construction, $v^\perp(s,\iota)=N_{\iota(s)}S$, so by Lemma \ref{lem:jcvp} we have a pointwise 1-VCP on the fibres of the normal bundle to $S$ at $\iota$, compatible with the inner product induced from $M$. From this it follows that  we have the linear operator $J$ on the tangent bundle $TB^+_{i,f}(S,M)$ defined by setting, at each point $[\iota\colon S\to M]$,
\begin{equation}\label{eq:J}
(J_{[\iota]}X)_s: = i_{\overrightarrow{T_{\iota(s)}S}}\chi_{\iota(s)} (X_s).
\end{equation}
Lee and Leung observed that and  $J$     
is  an almost  complex structure  on $TB(S, M)$ \cite[p. 146]{LL2007}, compatible with the $L_2$-metric, thus endowing $B^+_{i,f}(S,M)$ with the structure  of  an almost Hermitian  manifold. 
As the fundamental 2-form of this Hermitian structure  is the   transgression  defined by  (\ref{eq:push2})   of  the   $(r+2)$-VCP-form $\varphi_\chi$  defined in (\ref{eq:varphi-chi}), \cite[Lemma 7]{LL2007}, one sees that 
if  $\varphi_\chi$ is closed, then   $B^+_{i,f}(S,M)$ is an almost K\"ahler manifold.

\medskip

In the remaining  part  of this section we  assume that $\chi$ is a parallel   VCP on
$(M,g)$. In particular, this implies that $\varphi_\chi$ is closed.

%
%

\begin{proposition}\label{prop:almost-complex-connectionf}
The almost complex structure $J$ is parallel with respect to the affine connection $\nabla^\perp$ of equation (\ref{eq:perp1}), i.e. $\nabla^\perp J=0$. Equivalently, $\nabla^\perp$ is an almost complex connection with respect to the almost complex structure $J$.
\end{proposition}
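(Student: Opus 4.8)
The plan is to show $(\nabla^\perp_u J)v = 0$ for all tangent vectors $u, v \in T_{[\iota]}B^+_{i,f}(S,M) = \Gamma(N_\iota S)$, using the same exponential-deformation trick employed in the proof of Lemma~\ref{lem:normal}. Concretely, I would extend $u$, $v$ and an auxiliary test vector $w$ to vector fields $X^{\mathrm{ver}}, Y^{\mathrm{ver}}, Z^{\mathrm{ver}}$ generated by the geodesic spray $\gamma(x,y,z,s) = \Exp_{\iota(s)}(x\,u(s) + y\,v(s) + z\,w(s))$ as in equation~(\ref{eq:Exp3}). Since $\nabla^\perp J$ is a tensor, it suffices to compute with these particular extensions, and by Lemma~\ref{lem:normal} the covariant derivatives $\nabla^\perp_{X^{\mathrm{ver}}}Y^{\mathrm{ver}}$, $\nabla^\perp_{X^{\mathrm{ver}}}Z^{\mathrm{ver}}$ (and all permutations) vanish at $[\iota]$. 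Thus
\[
(\nabla^\perp_{X^{\mathrm{ver}}} J)(Y^{\mathrm{ver}})_{[\iota]} = \nabla^\perp_{X^{\mathrm{ver}}}(JY^{\mathrm{ver}})_{[\iota]} - J\big(\nabla^\perp_{X^{\mathrm{ver}}}Y^{\mathrm{ver}}\big)_{[\iota]} = \nabla^\perp_{X^{\mathrm{ver}}}(JY^{\mathrm{ver}})_{[\iota]},
\]
so the whole problem reduces to showing that $\nabla^\perp_{X^{\mathrm{ver}}}(JY^{\mathrm{ver}})_{[\iota]} = 0$, i.e. that the section $JY^{\mathrm{ver}}$ of the normal bundle behaves, as far as its $x$-derivative along $\gamma$ is concerned, like the sections $Y^{\mathrm{ver}}, Z^{\mathrm{ver}}$ did in Lemma~\ref{lem:normal}.

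To establish this, I would test against an arbitrary $w \in \Gamma(N_\iota S)$ and compute, using the definition~(\ref{eq:perp1}) of $\nabla^\perp$,
\[
g\big((\gamma^*(\nabla^{LC;M})_{\frac{\partial}{\partial x}}(JY^{\mathrm{ver}})_{[\gamma]})\big\vert_{0}, w\big)_s.
\]
The key input is that $\chi$ is parallel with respect to $\nabla^{LC;M}$, so the operator $i_{\overrightarrow{T_sS}}\chi$ commutes with $\gamma^*(\nabla^{LC;M})_{\frac{\partial}{\partial x}}$ up to the $x$-variation of the unit tangent $r$-vector $\overrightarrow{T_sS\,\gamma(x,\cdot)}$. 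Writing $JY^{\mathrm{ver}} = i_{\overrightarrow{T_sS\gamma}}\chi(Y^{\mathrm{ver}})$ and differentiating, I get two contributions: one where $\frac{\partial}{\partial x}$ hits $Y^{\mathrm{ver}}$ (handled exactly by Lemma~\ref{lem:normal}, since $i_{\overrightarrow v}\chi$ maps $v^\perp$ to $v^\perp$ and the metric pairing with $w \in N_\iota S$ survives), and one where $\frac{\partial}{\partial x}$ hits the $r$-vector field $\overrightarrow{T_sS\gamma(x,\cdot)}$ determining the fibre. For the second term I would observe that $\gamma$ is purely normal at $(x,y,z)=(0,0,0)$: at this point $\frac{d\gamma}{dx}$ is a section of $N_\iota S$, so $Y^{\mathrm{hor}}$ and $Y^{\mathrm{ver}}$ decompose $Y$ with $Y^{\mathrm{hor}} \circ \gamma(0,0,0,s) = 0$, and the Taylor-expansion / compactness argument from the end of Lemma~\ref{lem:normal} applies verbatim to whichever of the two pieces carries a vanishing-at-$0$ factor; the remaining piece pairs a tangential object against the normal vector $w$ and hence vanishes.

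The main obstacle I anticipate is bookkeeping the decomposition of $\frac{\partial}{\partial x}\big(i_{\overrightarrow{T_sS\gamma}}\chi(Y)\big)$ into its tangential and normal parts and verifying that each normal component either is killed by parallelism of $\chi$ combined with Lemma~\ref{lem:normal}, or carries a factor vanishing at the base point so that the Taylor argument applies. A clean way to organize this is to note that $JY^{\mathrm{ver}} = \Pi\circ (i_{\overrightarrow{T_sS\gamma}}\chi)\circ(\text{inclusion})\,(Y^{\mathrm{ver}})$ where $\Pi$ is the normal projection, and that $\Pi$, the $r$-vector field, and the inclusion are all smooth in $(x,y,z,s)$ and, at the base point, $\Pi$ acts as the identity on the relevant vectors while $Y^{\mathrm{hor}}$ vanishes. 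Alternatively, one could argue more abstractly: $J$ is, by Lemma~\ref{lem:jcvp}, built pointwise from $\chi$ and the tautological $r$-plane field, both of which are $\nabla^{LC;M}$-parallel (the latter along geodesics normal to $\iota(S)$, to first order at $t=0$), hence $J$ is $\nabla^\perp$-parallel. Either way, once $\nabla^\perp J = 0$ is proven, the equivalent reformulation ``$\nabla^\perp$ is an almost complex connection'' is immediate from the definition of an almost complex connection as one for which the almost complex structure is covariantly constant, completing the proof of Proposition~\ref{prop:almost-complex-connectionf}.
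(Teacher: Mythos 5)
Your setup is the same as the paper's: reduce to the vertical fields generated by the exponential deformation (\ref{eq:Exp3}), kill the $J(\nabla^\perp_XY)$ term with Lemma \ref{lem:normal}, use parallelism of $\chi$ to move the derivative past $\chi$, and isolate the term where $\frac{\partial}{\partial x}$ hits the unit tangent $r$-vector $\overrightarrow{T_{\gamma(x,s)}\gamma(x)(S)}$. The gap is in how you dispose of that last term. You claim it is handled either by the Taylor/compactness argument of Lemma \ref{lem:normal} or because ``the remaining piece pairs a tangential object against the normal vector $w$ and hence vanishes.'' Neither is true. The tangent plane of $\gamma(x)(S)$ rotates to first order in $x$: with $E_i$ obtained by transporting an orthonormal frame $e_i$ of $T_{\iota(s)}\iota(S)$ along the deformation, one has $\gamma^*(\nabla^{LC;M})_{\frac{\partial}{\partial x}}E_i\bigl\vert_{x=0}=\nabla^{LC;M}_{e_i}u$, which is generically nonzero (its tangential part is the shape operator, its normal part the normal covariant derivative of $u$). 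So there is no vanishing-at-$0$ factor to Taylor-expand, and the resulting expression $\varphi_\chi(e_1,\dots,(\nabla^{LC;M}_{e_i}u)^{\perp},\dots,e_r,v,w)$ has $r-1$ tangential and $3$ normal slots, which $\varphi_\chi$ does not annihilate in general (e.g.\ for $G_2$, $r=1$, this is $\varphi(\xi,v,w)$ with all three arguments normal). For the same reason your abstract alternative --- that the tautological $r$-plane field is $\nabla^{LC;M}$-parallel to first order along normal geodesics --- is false.

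What the paper actually does at this point is a two-step cancellation that your proposal is missing: (i) the derivative of the normalization $\vol(E_1\wedge\cdots\wedge E_r)^{-1}$ produces $-g(u,W_{[\iota]})\varphi_\chi(e_1,\dots,e_r,v,w)$ with $W$ the mean-curvature multiple of Lemma \ref{lem:MCV}, and this exactly cancels the contribution of the tangential parts $(\nabla^{LC;M}_{e_i}u)^{T}$, whose trace is $\sum_i g(\nabla^{LC;M}_{e_i}u,e_i)=g(u,W_{[\iota]})$; (ii) the surviving term $\sum_i\varphi_\chi(e_1,\dots,(\nabla^{LC;M}_{e_i}u)^{\perp},\dots,e_r,v,w)$ is not zero pointwise but is eliminated by a tensoriality argument: the left-hand side $g(\nabla^\perp_u(JY),w)_{\iota(s)}$ depends only on the value $u(s)$, so one may choose the extension of $u$ with $(\nabla^{LC;M}_{e_i}u)^{\perp}_s=0$, forcing both sides to vanish. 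Without steps (i) and (ii) the proof does not close, so as written your argument has a genuine hole precisely where the geometry (mean curvature and second fundamental form) enters.
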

\begin{proof}
We need to prove that $\nabla^\perp_ XJ=0$ for any vector field $ X$ on $B^+_{i,f}(S,M)$.    As $(\nabla^\perp_ XJ) Y=\nabla^\perp_ X(J Y )-J(\nabla^\perp Y)$, this is equivalent to proving 
\begin{equation}\label{eq:left-side-is-a-tensor}
\langle \nabla^\perp_ X(J Y)-J(\nabla^\perp_X  Y), Z\rangle_{[\iota]}=0
\end{equation}
for any three vector fields $X, Y, Z$ on $B^+_{i,f}(S,M)$, and any point $[\iota] \in B^+_{i,f}(S,M)$. 
 Since the left hand side of (\ref{eq:left-side-is-a-tensor}) is a tensor,
without loss of generality, as in the   proof  of Theorem  \ref{thm:henrich},  we assume that $X,  Y,  Z$  are    the vertical vector fields generated
by the three-parameter   variation
\begin{align*}\gamma: (-\epsilon,\epsilon)\times (-\epsilon,\epsilon)\times(-\epsilon,\epsilon) \times  S &\to M\\
 (x, y, z, s) &\mapsto  \gamma(x, y, z, s),\\
 & \gamma(0,0,0,s) =\iota(s)
 \end{align*}
 defined by  (\ref{eq:Exp3}) and therefore  we can apply  Lemma \ref{lem:normal}, giving $\nabla^\perp _XY=0$.  From the defining equation (\ref{eq:Exp3}), the restrictions of the vertical vector fields $X,Y$ and $Z$ at $(x,y,z)=(0,0,0)$ are the sections $u,v$ and $w$ of $N_\iota S$, respectively.
We are then reduced to proving that 
$$\langle \nabla^\perp_X(JY),Z\rangle_{[\iota]}=0,$$
i.e.,  due to fact that by (\ref{eq:perp1})  the value $\nabla ^\perp _X  (JY)_{\iota(s)}$ depends
only  on $u(s)$ and due to the arbitrariness of  the restriction $w$ of $Z$ at $[\iota]$, to show that
\begin{equation}\label{eq:vanish8}
g(\nabla ^\perp _u (JY), w)_{\iota (s)}  = 0
\end{equation}
 for  $Y$  the vertical vector field  defined by (\ref{eq:Exp3}), for any $u,w\in \Gamma(N_\iota S)$ and  for all $s \in  S$, which we prove next.

From the definition of $\nabla^\perp$, equation (\ref{eq:perp1}), and the definition of $J$, equation (\ref{eq:J}),  we get

\begin{align}\label{eq:vcp2}
 g(\nabla_u^\perp(J Y),w)_{\iota(s)}&=
g(\bigl(\gamma ^*(\nabla ^{LC;M})_{{\frac{\p}{\p x}}} \chi(\overrightarrow{T_{\gamma(x, s)} \gamma(x) (S)},{Y}) \bigr)^{\perp}, w)_{\iota(s)}.
\end{align}
As $\chi$ is parallel with respect to the Levi-Civita connection on $M$, and since $w \in \Gamma( N_\iota S)$, we get
\begin{align*}
g(\nabla^\perp _u(JY),& w)_{\iota(s)} = g( \gamma^* (\nabla ^{LC; M})_{ {\frac{\p}{\p x}}} \chi (\overrightarrow{T_{[\gamma]}S}, {Y}), w)_{\iota(s)}  \\
\notag& =g( \chi (\gamma^*(\nabla^{LC;M})_{ {\frac{\p}{\p x}}}\overrightarrow{T_{[\gamma]}S}, v), w)_{\iota(s)}   + g(J\nabla^\perp_X Y, w)_{\iota(s)}\\
\notag&= \varphi_\chi (\gamma^*(\nabla^{LC;M})_{ {\frac{\p}{\p x}}}\overrightarrow{T_{[\gamma]}S}, v, w)_{\iota(s)}, 
\end{align*}
where we used $\nabla^\perp_XY=0$ from Lemma \ref{lem:normal}.

Let ${E}_1(x,s), \cdots, {E}_r(x,s)$ be the  basis  of   $T_{\gamma (x, s)}  \gamma (x)S$ obtained by the exponential flow in the direction $X$ from an orthonormal basis $e_1(s), \cdots, e_r(s)$ of $T_{\iota(s)}\iota(S)\subset T_{\iota(s)}M$, so that
$$  \overrightarrow{T_{\gamma(x, s)} \gamma(x) (S)}  =     \vol  ({E}_1 (x,s) \wedge \cdots  \wedge  {E}_r(x,s))^{-1}  {E}_1(x,s) \wedge   \cdots   {E}_r (x, s).$$

Then  we have
\begin{align}\label{eq:calib}
\varphi_\chi & (\gamma^*(\nabla^{LC;M})_{ {\frac{\p}{\p x}}}\overrightarrow{T_{[\gamma]}S}, v, w)|_{x=0} \\
&= \gamma^*(\nabla^{LC;M})_{\frac{\p}{\p x}} ( (\vol  ({E}_1 (x,s)\wedge \cdots \wedge  {E}_r(x,s))^{-1}) \bigr\vert_{x=0} \varphi_\chi(e_1,\dots e_r, v, w)\nonumber\\
&\notag +\sum_{i=1}^r \varphi (e_1, \cdots  , \gamma^*(\nabla^{LC;M})_{ {\frac{\p}{\p x}}} {E}_i \bigr\vert_{x=0}, \cdots, e_r,  v, w).
\end{align}
From the defining equation (\ref{eq:gradient}) for the gradient vector field $W$ we have
\begin{align*}
\gamma^*(\nabla^{LC;M})_{ {\frac{\p}{\p x}}}&( (\vol  ({E}_1 (x,s)\wedge \cdots \wedge  {E}_r(x,s))^{-1})\bigr\vert_{x=0}\\
&=-g(u,W_{[\iota]}),
\end{align*}
so equation (\ref{eq:calib}) becomes 
\begin{align}\label{eq:calib2}
\varphi_\chi & (\gamma^*(\nabla^{LC;M})_{ {\frac{\p}{\p x}}}\overrightarrow{T_{[\gamma]}S}\bigr\vert_{x=0}, v, w) \\
&= -g(u,W_{[\iota]}) \varphi_\chi(e_1,\dots e_r, v, w)\nonumber\\
&\notag +\sum_{i=1}^r \varphi_\chi (e_1, \cdots  , \gamma^*(\nabla^{LC;M})_{ {\frac{\p}{\p x}}} {E}_i\bigr\vert_{x=0}, \cdots, e_r,  v, w)
\end{align}

Denote by $(\gamma^*(\nabla^{LC;M})_{ {\frac{\p}{\p x}}} E_i)|_{x=0}^T$ the projection of
$(\gamma^*(\nabla^{LC;M})_{ {\frac{\p}{\p x}}} E_i)|_{x=0}$   to $T\iota(S)$ , and by $(\gamma^*(\nabla^{LC;M})_{ {\frac{\p}{\p x}}} E_i)|_{x=0}^\perp$ the projection on the normal bundle. As the Levi-Civita connection $\nabla^{LC;M}$ is torsionless and compatible with the metric $g$ on $M$, we have 
$$
(\gamma^*(\nabla^{LC;M})_{ {\frac{\p}{\p x}}} E_i)|_{x=0}^T=\left(\nabla^{LC;M}_{e_i}u\right)^T=\sum_{i=1}^r g(\nabla^{LC;M}_{e_i}u,e_j) e_j.
$$ 
Therefore
\begin{align*}
\sum_{i=1}^r &\varphi_\chi (e_1, \cdots  , \gamma^*(\nabla^{LC;M})_{ {\frac{\p}{\p x}}} {E}_i\bigr\vert_{x=0}^T, \cdots, e_r,  v, w)\\
&=\left(\sum_{i=1}^r g(\nabla^{LC;M}_{e_i}u,e_i)\right) \varphi_\chi (e_1, \cdots, e_r,  v, w).
\end{align*}
As $u$ is a normal vector field on $\iota(S)$ while $e_i$ is a tangent vector field, we have that $g(u,e_i)$ is identically zero on $S$ and so
by applying $\nabla^{LC;M}_{e_i}$ we find
\[
g(u,(\nabla^{LC;M}_{e_i}e_i)^\perp)=-g(\nabla^{LC;M}_{e_i}u,e_i).
\]
By Lemma \ref{lem:MCV} we therefore get
\[
g (u,W_{[\iota]})=
\sum_{i=1}^{r} g(\nabla^{LC;M}_{e_i}u,e_i).
\]
Equation (\ref{eq:calib2}) then reduces to
\begin{align}\label{eq:calib3}
\varphi_\chi & (\gamma^*(\nabla^{LC;M})_{ {\frac{\p}{\p x}}}\overrightarrow{T_{[\gamma]}S}\bigr\vert_{x=0}, v, w) \\
&\notag =\sum_{i=1}^r \varphi_\chi (e_1, \cdots  , \left(\nabla^{LC;M}_{e_i}u\right)^\perp, \cdots, e_r,  v, w)
\end{align}
and so we have obtained the identity
\begin{equation}\label{eq:last-step}
g(\nabla^\perp _u(JY), w)_{\iota(s)}=\sum_{i=1}^r \varphi_\chi (e_1, \cdots  , \left(\nabla^{LC;M}_{e_i}u\right)^\perp, \cdots, e_r,  v, w)_s
\end{equation}
for ay sections $u,v,w$ of $N_\iota S$ and for any extension $Y$ of $v$ to a vertical vector field along a three-parameter deformation $\gamma$ of $\iota$ by equation (\ref{eq:Exp3}). 
As for fixed $v$ and $w$ the left hand side of equation (\ref{eq:last-step}) only depends on the value of $u$ at $s$, it is not restrictive to assume $\left(\nabla^{LC;M}_{e_i}u\right)_s^\perp=0$ for every $i=1,\dots,r$. Therefore $g(\nabla^\perp _u(JY), w)_{\iota(s)}=0$, as we wanted to prove.
%
%
%
  \end{proof}

Let us recall  the expression of the  Nijenhuis   tensor $N_J$   of  an almost complex structure $J$ on a  smooth  manifold  $\Mf$, see e.g., \cite[p. 123]{KN1969}.
\begin{equation}\label{eq:NT}
N_J(X,Y): =  2\{[JX, JY] - [X,Y] - J[X,JY]- J [JX,Y]\}
\end{equation}
for  $X, Y \in \X(\Mf)$. Moreover, if $\nabla$ is an almost complex affine connection with respect to the almost complex structure $J$
, by  Proposition   3.6 in \cite[p. 145]{KN1969}, we have
\begin{equation}\label{eq:KN1}
N_J(X,Y)=-2\left(T(JX,JY)-J(T(JX,Y))-J(T(X,JY))-T(X,Y)\right),
\end{equation}
where $T$ is the torsion of $\nabla$.  It is noteworthy that the proof  of  Proposition  3.6   ibid.   works not only for  the case   of    finite dimensional   manifolds  $\Mf$, but also for infinite dimensional manifolds for which affine connections and Lie brackets of vector fields can be defined.   Namely, the    argument ibid. uses  the   expression 
for  $T$ defined in (\ref{eq:LC2}), and then  applies it to the RHS of  (\ref{eq:KN1}).   Summing up, the     RHS of (\ref{eq:KN1})  is  written   as  a sum of   eight  summands  involving  $\nabla$   and     four summands   involving  the Lie brackets.  Since  $\nabla$ commute with $J$ by definition of almost complex affine connection,  the sum  of  the  eight  summands  involving  $\nabla$    vanishes.  Then  they observe that  the  sum of the  four summands   involving  the Lie brackets  is   equal to the RHS  of  (\ref{eq:NT}). This   proves  (\ref{eq:KN1}).

In particular, this applies to the infinite dimensional manifold $B^+_{i,f}(S,M)$ As $\nabla^\perp$ is torsion-free by Proposition \ref{prop:pull1}, and it is almost complex with respect to the almost complex structure $J$ by Proposition \ref{prop:almost-complex-connectionf}, we get the following.

\begin{corollary}\label{cor:tor1}  The almost complex structure $J$ on $B^+_{i,f}(S,M)$ satisfies      $N_J = 0$ and so it is a formally integrable almost complex structure.
\end{corollary}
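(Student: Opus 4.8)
The plan is to deduce Corollary~\ref{cor:tor1} directly from the two facts established just above it: that $\nabla^\perp$ is torsion-free (Proposition~\ref{prop:pull1}) and that $\nabla^\perp$ is an almost complex connection for $J$ (Proposition~\ref{prop:almost-complex-connectionf}). The bridge between these facts and the vanishing of $N_J$ is the classical identity~(\ref{eq:KN1}) relating the Nijenhuis tensor of an almost complex structure to the torsion of any almost complex affine connection, as found in \cite[Prop.~3.6, p.~145]{KN1969}. The only genuine point requiring care is that this identity was originally proved in the finite-dimensional setting, so I would first note explicitly that its proof is purely formal: it uses only the definition of torsion $T^\nabla(X,Y)=\nabla_XY-\nabla_YX-[X,Y]$, the $C^\infty$-linearity properties of $\nabla$, the fact that $\nabla$ commutes with $J$, and the bilinearity of the Lie bracket of vector fields. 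All of these are available on $B^+_{i,f}(S,M)$ by the convenient-calculus framework recalled in Section~\ref{sec:pre} (in particular item (5) of Subsection~\ref{subs:smooth} guarantees that Lie brackets of kinematic vector fields are well-defined).

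Concretely, the argument runs as follows. Start from~(\ref{eq:KN1}) with $\nabla=\nabla^\perp$ and $T=T^\perp$ its torsion. By Proposition~\ref{prop:pull1}, $T^\perp\equiv 0$, so every one of the four torsion terms $T^\perp(JX,JY)$, $J(T^\perp(JX,Y))$, $J(T^\perp(X,JY))$, $T^\perp(X,Y)$ on the right-hand side of~(\ref{eq:KN1}) vanishes identically. Hence $N_J(X,Y)=0$ for all vector fields $X,Y$ on $B^+_{i,f}(S,M)$, i.e.\ $N_J\equiv 0$. Since the vanishing of the Nijenhuis tensor is precisely the definition of formal integrability of an almost complex structure, this proves that $J$ is formally integrable, completing the proof of the Corollary.

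The main obstacle is not computational at all — there is essentially nothing to compute — but expository: one must be convinced, and convince the reader, that invoking \cite[Prop.~3.6]{KN1969} in infinite dimensions is legitimate. This is exactly what the paragraph preceding the Corollary in the excerpt is doing, by spelling out that the Kobayashi--Nomizu argument expands the right-hand side of~(\ref{eq:KN1}) into eight terms built from $\nabla$ (which cancel in pairs because $\nabla J=J\nabla$) plus four terms built from Lie brackets (which reassemble into the right-hand side of~(\ref{eq:NT})). Since both ingredients make sense on $B^+_{i,f}(S,M)$, the identity transfers verbatim. Thus the proof of Corollary~\ref{cor:tor1} is a one-line consequence: \emph{By Proposition~\ref{prop:pull1} the connection $\nabla^\perp$ is torsion-free, and by Proposition~\ref{prop:almost-complex-connectionf} it is almost complex with respect to $J$; substituting $T=T^\perp=0$ into the identity~(\ref{eq:KN1}) gives $N_J=0$.}
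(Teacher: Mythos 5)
Your proposal is correct and is essentially identical to the paper's argument: the paper likewise justifies that the Kobayashi--Nomizu identity (\ref{eq:KN1}) holds in the convenient infinite-dimensional setting and then obtains $N_J=0$ by substituting the torsion-free almost complex connection $\nabla^\perp$ from Propositions \ref{prop:pull1} and \ref{prop:almost-complex-connectionf}. No gaps.
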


\begin{proposition}\label{prop:almost-complex-connection}
The almost complex structure $J$ is parallel with respect to the Levi-Civita connection $\nabla^{LC}$. Equivalently, $\nabla^{LC}$ is an almost complex connection with respect to the almost complex structure $J$.
\end{proposition}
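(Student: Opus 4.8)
The plan is to deduce this from the already-established facts rather than redo a direct computation. By Theorem \ref{thm:henrich} we have the splitting $\nabla^{LC}=\nabla^\perp-\tfrac{1}{2}\B$, so for any vector fields $X,Y$ on $B^+_{i,f}(S,M)$ one gets $(\nabla^{LC}_XJ)Y = (\nabla^\perp_XJ)Y - \tfrac12\bigl(\B(X,JY)-J\B(X,Y)\bigr)$. Proposition \ref{prop:almost-complex-connectionf} kills the first term, $\nabla^\perp J=0$, so the whole assertion reduces to the purely algebraic identity $\B(X,JY)=J\,\B(X,Y)$ at each point $[\iota]$, i.e. to checking that the symmetric tensor $\B$ defined in (\ref{eq:b}) commutes with $J$ in its second argument.

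First I would write out both sides using the explicit formula (\ref{eq:b}) for $\B$ together with the $C^\infty(S;\R)$-module structure on $N_\iota S$: with $u=X_{[\iota]}$, $v=Y_{[\iota]}$ and $W=W_{[\iota]}=-(\dim S)H_{\iota(S)}$ one has $\B(u,v)=g(u,v)W-g(u,W)v-g(v,W)u$, hence $J\B(u,v)=g(u,v)\,JW-g(u,W)\,Jv-g(v,W)\,Ju$, while $\B(u,Jv)=g(u,Jv)W-g(u,W)\,Jv-g(Jv,W)\,u$. So the identity $\B(u,Jv)=J\B(u,v)$ is equivalent to
\[
g(u,Jv)\,W-g(Jv,W)\,u = g(u,v)\,JW-g(v,W)\,Ju.
\]
Now the key point is that $J$ is a pointwise $1$-fold VCP on $v^\perp=N_{\iota(s)}S$ compatible with $g$ (Lemma \ref{lem:jcvp}), i.e. $J$ is a $g$-orthogonal, $g$-skew, $J^2=-\mathrm{id}$ operator on the fibres of $N_\iota S$. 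Compatibility with $g$ gives $g(u,Jv)=-g(Ju,v)=g(v,Ju)$ and $g(Jv,W)=g(v,JW)\cdot(-1)$... more carefully, skew-symmetry gives $g(Jv,W)=-g(v,JW)$; hence the left-hand side becomes $g(v,Ju)\,W + g(v,JW)\,u$ and one wants this to equal $g(u,v)\,JW - g(v,W)\,Ju$. This is exactly the statement that the vector-valued expression $g(v,\,\cdot\,)$-contraction of $(Ju)\otimes W - u\otimes JW - (JW)\otimes u + W\otimes Ju$... rather, I would instead verify it by testing against an arbitrary $z\in N_{\iota(s)}S$ and using that $g(\,\cdot\,,z)$ is a linear functional: the identity to check is $g(v,Ju)g(W,z)+g(v,JW)g(u,z) = g(u,v)g(JW,z)-g(v,W)g(Ju,z)$ for all $v,z$, i.e. equality of the two bilinear forms in $(v,z)$ given by $Ju\otimes W+u\otimes JW$ and $u\otimes JW+\ldots$. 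Equivalently, it suffices to show the operator identity $g(\,\cdot\,,z)$ aside, that for all $z$: $Ju\cdot g(W,z)+u\cdot g(JW,z) = -g(Ju,z)\,W - g(u,z)\,JW$ after moving terms, which — using $g(JW,z)=-g(W,Jz)$ and $g(Ju,z)=-g(u,Jz)$ — becomes $J$-equivariance of the rank-$\le 2$ symmetric operator $u\mapsto g(u,z)W+g(W,z)u$... The cleanest route: observe that $\B(X,Y)=g(X,Y)W-g(X,W)Y-g(Y,W)X$ is, for fixed $W$, built from the ``Killing-type'' tensor associated to $W$, and a one-line check using $J^\ast=-J$, $J^2=-1$ shows $\B(u,Jv)-J\B(u,v)= \bigl(g(u,Jv)+g(Ju,v)\bigr)W - \bigl(g(u,W)+g(u,W)\bigr)\cdots$; the terms proportional to $W$ cancel by skew-symmetry $g(u,Jv)=-g(Ju,v)$, the terms proportional to $u$ combine as $\bigl(-g(Jv,W)-g(v,JW)\bigr)u=0$ again by skew-symmetry, and what remains is $-g(v,W)Ju+g(v,W)Ju=0$. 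Hence $\B$ commutes with $J$ and $\nabla^{LC}J=0$.

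The main obstacle is essentially bookkeeping: making sure the $C^\infty(S;\R)$-linearity of $J$ in the second argument of $\B$ is legitimately used (it is, since $J$ acts fibrewise on $N_\iota S$ and $\B$ is defined via the module structure), and that no derivative of $J$ sneaks in — but it does not, because once $\nabla^\perp J=0$ is known from Proposition \ref{prop:almost-complex-connectionf}, the only remaining discrepancy between $\nabla^{LC}$ and $\nabla^\perp$ is the \emph{tensorial} term $\B$, so the problem is genuinely algebraic and pointwise. I would therefore present the proof as: (i) invoke $\nabla^{LC}=\nabla^\perp-\tfrac12\B$ and $\nabla^\perp J=0$; (ii) reduce to $\B(X,JY)=J\B(X,Y)$; (iii) verify this from (\ref{eq:b}) using that $J$ is $g$-orthogonal and skew on $N_\iota S$ (Lemma \ref{lem:jcvp}), the three terms cancelling in pairs. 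This also immediately upgrades to: $J$ is parallel with respect to \emph{any} torsion-free connection of the form $\nabla^\perp+(\text{symmetric tensor commuting with }J)$, which is the ``more generally'' claim in the introduction.
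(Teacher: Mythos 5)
Your reduction is formally correct: given Theorem \ref{thm:henrich} and Proposition \ref{prop:almost-complex-connectionf}, one has $(\nabla^{LC}_XJ)Y=(\nabla^\perp_XJ)Y-\tfrac12\bigl(\B(X,JY)-J\B(X,Y)\bigr)$, so the Proposition would indeed follow from the pointwise identity $\B(X,JY)=J\B(X,Y)$. The fatal problem is that this identity is \emph{false} in the generality needed, and your attempted verification of it does not close. Writing $u=X_{[\iota]}$, $v=Y_{[\iota]}$ and using (\ref{eq:b}), the terms $-g(u,W)Jv$ cancel and what remains is
\[
\B(u,Jv)-J\B(u,v)\;=\;g(u,Jv)\,W\;-\;g(u,v)\,JW\;+\;g(v,JW)\,u\;+\;g(v,W)\,Ju ,
\]
four terms proportional respectively to $W$, $JW$, $u$, $Ju$, which are in general linearly independent; in particular $g(u,Jv)W$ cannot cancel against $g(u,v)JW$ ``by skew-symmetry'' as you assert, since $W\perp JW$. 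Concretely, at a point $s$ with $W(s)\neq 0$ where $N_{\iota(s)}S$ has complex rank at least $2$ (which is exactly the situation in the $G_2$ case $\dim S=1$, $\dim M=7$ and in the Spin(7) case), choose a unit $u(s)$ orthogonal to both $W(s)$ and $JW(s)$ and take $v=u$: then $\B(u,Ju)(s)=0$ while $J\B(u,u)(s)=JW(s)\neq 0$. (The identity does hold when $N_\iota S$ has complex rank one, i.e.\ the codimension-two case, and trivially when $W=0$, e.g.\ $\dim S=0$; but not in the cases that are the point of the paper.) Note also that your own reduction, combined with Theorem \ref{thm:henrich} and Proposition \ref{prop:almost-complex-connectionf}, would then give $(\nabla^{LC}_uJ)u=\tfrac12JW\neq0$ — a genuine tension that should be flagged explicitly rather than absorbed into a cancellation that does not occur.

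The paper's proof takes an entirely different and $\B$-free route: it is the classical ``almost K\"ahler plus formally integrable implies K\"ahler'' argument. Since the fundamental $2$-form $\om$ is closed and $N_J=0$ by Corollary \ref{cor:tor1}, the Kobayashi--Nomizu identity \cite[Proposition 4.2, p.~148]{KN1969} yields $4\la(\nabla^{LC}_XJ)Y,Z\ra=\la N_J(Y,Z),JX\ra=0$, the only point requiring care being that the coordinate-free formula for $d\om$ and the proof of that identity remain valid for manifolds modeled on convenient vector spaces. To salvage your approach you would have to actually prove the commutation of $\B$ with $J$ (which, by the counterexample above, you cannot do in general) or abandon the splitting $\nabla^{LC}=\nabla^\perp-\tfrac12\B$ as the mechanism for this Proposition; as written, the proposal does not establish the statement.
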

\begin{proof}
We already   know that  $B^+_{i,f} (S,M)$ is a  smooth manifold  endowed  with the formally integrable   almost complex structure  $J$,  defined by (\ref{eq:J}),  and with  the  $L_2$-metric, which is  a compatible  with $J$ an so defines a  formally K\"ahler structure. 
As the fundamental  2-form $\om$ of the  formally K\"ahler metric  
  is closed,  by \cite[Proposition 4.2, p. 148]{KN1969}\footnote{Kobayshi-Nomidzu gave a  proof  of Proposition 4.2 ibid.  that  is also valid  for   infinite dimensional   manifolds   locally modeled  on  convenient vector spaces,   since  the  standard  formula
  	for $d\om$ ibid.   is  also valid  in  the  convenient  setting
  	\cite[(3), p. 342]{KM1997}.}
  	   we then have
\[
4 \la  (\nabla^{LC}_X J)Y, Z \ra = \la N_J(Y, Z), JX \ra=0
\]
for any three vector fields $X,Y,Z$ on $B^+_{i,f}(S,M)$, hence $\nabla^{LC} J=0$.

\end{proof}

\begin{proof}[Proof of Theorem \ref{thm:formalK}]
	Theorem \ref{thm:formalK}  follows from  Proposition \ref{prop:almost-complex-connection} and 
	Corollary \ref{cor:tor1}.
\end{proof}

\section*{Acknowledgement} 

DF thanks the  Institute  of Mathematics of the Czech Academy of Sciences for hospitality and for the excellent research environment in which this article was started.
DF and HVL  thank     Patrick  Iglesias-Zemmour   and   Ioannis   Chrysikos for     stimulating and helpful  discussions  at the  beginning  of this  project. We  are grateful  to the anonymous   referees  for  critical helpful  comments.

\end{document}